\title{The linearization problem of a binary quadratic problem and its applications}
\author{Hao Hu \thanks{Dept.~of Combinatorics and Optimization,  University of Waterloo, Waterloo, Canada, {\tt h92hu@uwaterloo.ca}}
	\and {Renata Sotirov \thanks{Department of Econometrics and OR, Tilburg University, Tilburg, The Netherlands, {\tt r.sotirov@uvt.nl}}} }
\newtheorem{theorem}{Theorem}[section]
\newtheorem{lemma}[theorem]{Lemma}
\newtheorem{proposition}[theorem]{Proposition}
\newtheorem{corollary}[theorem]{Corollary}
\newtheorem{definition}[theorem]{Definition}
\newtheorem{example}[theorem]{Example}
\theoremstyle{remark}
\newtheorem{remark}{Remark}
\newcommand{\R}{\mathbb{R}}
\DeclareMathOperator{\diag}{diag}
\DeclareMathOperator{\Diag}{Diag}
\date{}
\begin{document}

\maketitle

\begin{abstract}
We provide several applications of the linearization problem of a binary quadratic problem.
We propose a new lower bounding strategy, called the linearization-based scheme,
that is based on a simple certificate for a quadratic function to be non-negative on the feasible set.
Each linearization-based bound requires a set of linearizable matrices as an input.
We prove that the Generalized Gilmore-Lawler bounding scheme  for binary quadratic problems provides linearization-based bounds.
Moreover, we show that  the bound obtained from the  first level reformulation linearization technique is also a type of linearization-based bound, which
enables us to provide a comparison among mentioned bounds.
However, the strongest linearization-based bound is the one that uses the full characterization of the set of linearizable matrices.
Finally, we present a polynomial-time algorithm for the linearization problem of the quadratic shortest path problem
on directed acyclic graphs. Our algorithm  gives a complete characterization of the set of linearizable matrices for the quadratic shortest path problem.
\end{abstract}

\noindent Keywords: binary quadratic program, linearization problem, generalized Gilmore-Lawler bound, quadratic assignment problem, quadratic shortest path problem

\section{Introduction}

A binary quadratic problem (BQP) is an  optimization problem with binary variables, quadratic objective function and linear constraints.
BQPs are  NP-hard problems.  A wide range of combinatorial optimization problems,
including the quadratic assignment problem (QAP), the quadratic shortest path problem (QSPP), the graph partitioning problem, the max-cut problem, clustering problems, etc.,
can be modeled as a BQP.

We study here the linearization problem for binary quadratic problems.
A binary quadratic optimization problem is said to be linearizable if there exists a corresponding cost vector such that
the associated costs for both, quadratic and linear problems are equal for every  feasible vector.
The BQP linearization problem asks whether an instance of the BQP is linearizable.
The linearization problem is studied  in the context of many combinatorial optimization problems.
Kabadi and Punnen \cite{kabadi2011n} give a necessary and sufficient condition for an instance of the quadratic assignment problem (QAP) to be linearizable,
 and develop a polynomial-time algorithm to solve the corresponding linearization problem.
The linearization problem for the Koopmans-Beckmann QAP is studied in \cite{punnen2013linear}. Linearizable special cases of the QAP are studied
 in \cite{adams2014linear,cela2016linearizable,punnen2001combinatorial}.
In \cite{custic2017characterization} it is shown  that the linearization problem for the bilinear assignment problem can be solved in polynomial time.
The linearization problem for the quadratic minimum spanning tree problem was considered by \'{C}usti\'{c} and Punnen \cite{custic2018characterization}.
Punnen, Walter and Woods \cite{punnenWoods} provide necessary and sufficient conditions for which a cost matrix of the quadratic traveling salesman problem is linearizable.
Hu and Sotirov \cite{hu2018special} provide a polynomial time algorithm that verifies if a QSPP instance on a directed grid graph  is linearizable.
The authors of \cite{hu2018special} also present necessary conditions for a QSPP instance on complete digraphs to be linearizable. These conditions are also sufficient when the
complete digraph has only four vertices.

There are very few studies concerning applications of the linearization problem.
Punnen, Pandey and Friesen  \cite{punnen2018representations}
show how to derive equivalent representations of a quadratic optimization problem by using linearizable matrices of the problem.
They show that equivalent representations might result with different bounds for the optimization problem.

In this paper, we present several interesting applications of the linearization problem.
We  propose a new lower bounding scheme that uses a simple certificate for a quadratic function to be non-negative on the feasible set.
The resulting bounds we call the linearization-based bounds.
Each linearization-based bound requires a set of linearizable matrices as an input, and its quality depends on those matrices.
To compute a particular linearization-based bound, one needs to solve one linear programming problem.
The strongest linearization-based bound is the one that uses the full characterization of the set of linearizable matrices.
Further,  we show that  bounds obtained from an iterative lower bounding strategy  for BQPs, known as  the  Generalized Gilmore-Lawler  (GGL) scheme,
see e.g.,  \cite{hahn1998lower,carraresi1992new,rostami2018quadratic,Rostami:QMST}, are also  linearization-based bounds.
Note that the well known Gilmore-Lawler bound is the  first bound within the Generalized Gilmore-Lawler bounding scheme.
Furthermore, we prove that one of the linearization-based bounds with a particular choice of linearizable matrices  dominates the GGL bounds.
The same linearization-based bound  is equivalent to the first level RLT relaxation by Adams and Sherali \cite{adams1990linearization}
for BQPs where upper bounds on the vector of variables are implied by the constraints.
 Here RLT stands for reformulation linearization technique.
This result explains the relation  between the Generalized Gilmore-Lawler bounds and the first level RLT bound,
which was already observed in the context of the quadratic assignment problem \cite{frieze1983quadratic,hahn1998branch} but not in general.
Further,  we extend the notion of linearizable matrices, which results in the extended linearization-based bounds.

Finally, we provide a polynomial-time algorithm for the linearization problem of the quadratic shortest path problem on directed acyclic graphs (DAGs).
We solve the linearization problem for the QSPP on DAGs  in ${\mathcal O}(nm^{3})$ time, where $n$ is the number of vertices and  $m$ is the number of arcs in the given graph.
Our algorithm also yields a characterization of the set of linearizable matrices, and thus provides the strongest linearization-based bound for the QSPP on DAGs.

\medskip	
The paper is organized as follows. In Section \ref{section_qspp} and \ref{sec_linearization}, we introduce the binary quadratic problem and its linearization problem, respectively.
 In Section \ref{sec_sos}, we show how to reformulate a binary quadratic minimization problem into an equivalent maximization problem that is suitable for deriving bounds.
In Section \ref{sec:lbb}, we introduce the linearization-based scheme.
In Section \ref{sect:GLB}, we  show that the  Generalized Gilmore-Lawler bounds are also linearization-based bounds.
Section \ref{sect:RLT} relates different linearization-based bounds to the first level RLT bound, and Section \ref{sect:strongest} demonstrates
the strength of the strongest lineariztion-based bound.
In Section \ref{sect:extendLin} introduced the extended linearization-based bounds.
In Section \ref{section_qspp_linear}, we present a polynomial-time algorithm that verifies whether a QSPP instance on a directed acyclic graph is linearizable.
Conclusion and suggestions for further research are given in Section \ref{sec_conclusions}.

\section{Binary quadratic problems} \label{section_qspp}

In this section, we introduce binary quadratic problems and their two special cases; the quadratic assignment problem and the quadratic shortest path problem.

Let $K$ be the set of feasible binary vectors, i.e.,
	\begin{equation} \label{BQP_K}
	K := \{ x \in \mathbb{R}^{m} \;|\; Bx=b, ~x \in \{0,1\}^{m} \},
	\end{equation}
where $B\in \mathbb{R}^{n\times m}$ and $b\in \mathbb{R}^{n}$.
	We are interested in binary quadratic problems of the form
	\begin{equation}\label{poly_quad}
	\min_{x \in K} x^{\mathrm T}Qx,
	\end{equation}
	where $Q \in \mathbb{R}^{m \times m}$ is the given quadratic cost matrix.
Note that we allow here that  $Q$  has  also negative elements.
In the case that $Q$ is a diagonal matrix i.e., $Q={\rm Diag}(c)$ the objective is linear and we have the following linear optimization problem:
	\begin{equation}\label{poly_linear}
	\min_{x \in K} c^{\mathrm T}x.
	\end{equation}
The simple model \eqref{poly_quad} is notable for representing a wide range of combinatorial optimization problems, including the quadratic assignment problem and  the quadratic shortest path problem.

The quadratic assignment problem  is one of the most difficult combinatorial optimization problems. It was introduced by Koopmans and Beckmann \cite{koopmans1957assignment}.
It is well-known that the QAP contains the traveling salesman problem as a special case and is therefore NP-hard in the strong sense.
The QAP can be described as follows.
Suppose that there are $n$ facilities and $n$ locations. The flow between each pair of facilities, say $i,k$, and the distance between each pair of locations, say $j,l$, are specified by $a_{ik}$ and $d_{jl}$, respectively.
 The problem is to assign all facilities to different locations with the goal of minimizing the sum of the distances multiplied by the corresponding flows.
 The quadratic assignment problem  is given by:
	\[
	\min \left \{ \sum_{i,j,k,l}a_{ik} d_{jl}x_{ij}x_{kl} : ~X=(x_{ij}), ~X\in \Pi_n \right \},
	\]
	where $\Pi_n$ is the set of $n\times n $ permutation matrices.
If $A=(a_{ik})$, $D=(d_{jl})$ and $x = \text{vec}(X) \in \mathbb{R}^{n^{2}}$, then the objective can be written as $x^{\mathrm T}(A \otimes D)x$.
Here, the vec operator stacks the columns of the matrix $X$, and $\otimes$ denotes the Kronecker product.
The QAP is  known as a generic model for various real-life problems, see e.g., \cite{Burkard:2009}.

	The quadratic shortest path problem has a lot of important application in transportation problems, see  \cite{murakami1997comparative,rostami2018quadratic,Sen:01,sivakumar1994variance}.
The QSPP is an NP-hard optimization problem, see \cite{hu2018special,rostami2018quadratic}, and it can be described as follows.
Let $G = (V,A)$ be a directed graph with $n$ vertices and $m$ arcs, and $s$, $t$ two distinguished vertices in $G$. A path is a sequence of distinct vertices ${(v_{1},\ldots,v_{k})}$
	such that $(v_{i},v_{i+1})\in A$ for $i=1,\ldots,k-1$. An $s$-$t$ path is a path ${P =(v_{1},v_{2},\ldots,v_{k})}$ from the source vertex $s = v_{1}$ to the target vertex $t = v_{k}$.
Let the interaction cost between two distinct arcs $e$ and $f$ be $2q_{ef}$, and the linear cost of an arc $e$ be $q_{e,e}$. The quadratic shortest path problem is given by:
	\begin{equation}\label{quadratic_QSPP}
	\text{minimize} \big\{ \sum_{e,f \in A} q_{e,f}x_{e}x_{f}  \;|\;  x \in \mathcal{P} \big\},
	\end{equation}
	where $\mathcal{P}$ be the set of characteristic vectors of all $s$-$t$ paths in $G$.

\section{The linearization problem of a BQP}\label{sec_linearization}

We say that the binary quadratic optimization problem \eqref{poly_quad} is {linearizable}, if there exists a cost vector $c$ such that $x^{\mathrm T}Qx = c^{\mathrm T}x$ for every $x \in K$.
If such a cost vector $c$ exists, then we call it a linearization vector of $Q$.
The cost matrix $Q$ is said to be linearizable if its  linearization vector $c$ exists. The linearization problem of a BQP asks whether $Q$ is linearizable, and if yes, provide its linearization vector $c$.

If $Q$ is linearizable, then \eqref{poly_quad} can be equivalently formulated as \eqref{poly_linear}, and the latter could be much easier to solve.
For instance, in the case of the quadratic assignment problem or  the quadratic shortest path problem on directed acyclic graphs
this boils down to solve a linear programming problem.
	
Let us define the  spanning set of linearizable matrices for the given BQP.
\begin{definition}
Let $\{ Q_{1},\ldots,Q_{k} \}$ be a set of matrices such that a cost matrix $Q$ is linearizable if and only if
$Q = \sum_{i=1}^{k}\alpha_{i}Q_{i}$ for some $\alpha \in \mathbb{R}^{k}$.
Then, we say that $\{ Q_{1},\ldots,Q_{k} \}$ span the set of linearizable matrices.
	\end{definition}
In Section \ref{section_qspp_linear}, we show that the spanning set of linearizable matrices for the quadratic shortest path problem on directed acyclic graphs can be generated efficiently.
Thus we have  a complete characterization of the set of linearizable matrices for the QSPP on DAGs.
This is also the case for the bilinear assignment problem, see \cite{custic2017characterization}.
In fact, the authors in \cite{lendl2017combinatorial} show that the set of linearizable cost matrices for combinatorial optimization problems with interaction costs can be characterized by the so-called {constant value property}, under certain conditions.
For the list of  non-trivial binary quadratic problems for which one can find the spanning set of linearizable matrices, see  \cite{lendl2017combinatorial}.
	
In general, it is not clear whether one can find a complete  characterization  of the set of linearizable matrices for a given BQP.
However, it is not difficult to identify a subset of linearizable matrices. For instance, the sum matrix is often found to be linearizable.
We say that a matrix $M \in \R^{m\times n}$ is a sum matrix generated by vectors
$a \in  \R^{m}$ and $b \in  \R^{n}$ if $M_{i,j} = a_{i} + b_{j}$ for every $i = 1,\ldots,m$ and $j= 1,\ldots,n$.
In the quadratic assignment problem, if $A$ or $D$ is a sum matrix, then the corresponding cost matrix is linearizable, see \cite{burkard2012assignment}.
In the quadratic shortest path problem, if every  $s$-$t$ path in the graph  has the same length, then a sum-matrix $Q$ is always linearizable, see \cite{hu2018special,punnen2018representations}.
	
In the latter case, the condition for a matrix to be linearizable depends on the problem structure.
Since we are interested in a lower bounding scheme for general binary quadratic problems, we need a condition for linearizability that is independent of the problem.
The next result provides an universal sufficient condition for a matrix being linearizable, and it is also the key connecting the linearization problem and some of the existing bounds in the literature.

\begin{lemma} \cite{punnen2018representations} \label{poly_gen1}
Consider the  BQP  \eqref{poly_quad}.
For any $Y \in \mathbb{R}^{n \times m}, z \in \mathbb{R}^{m}$, the matrix $Q =  B^{\mathrm T}Y + \Diag(z) \in \mathbb{R}^{m \times m}$ is linearizable
 with linearization vector $c = Y^{\mathrm T}b + z \in \mathbb{R}^{m}$.
\end{lemma}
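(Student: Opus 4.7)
The plan is to verify the identity $x^{\mathrm T}Qx = c^{\mathrm T}x$ pointwise for every $x \in K$ by splitting $Q$ into the two summands $B^{\mathrm T}Y$ and $\Diag(z)$ and handling each one with exactly one of the two properties defining membership in $K$: the linear constraint $Bx=b$ for the first piece, and the binary constraint $x_i\in\{0,1\}$ for the second.

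More concretely, I would first write
\[
x^{\mathrm T}Qx \;=\; x^{\mathrm T}B^{\mathrm T}Yx \;+\; x^{\mathrm T}\Diag(z)x .
\]
For the first term, I would use the feasibility constraint $Bx=b$ to rewrite $x^{\mathrm T}B^{\mathrm T}=(Bx)^{\mathrm T}=b^{\mathrm T}$, so that $x^{\mathrm T}B^{\mathrm T}Yx = b^{\mathrm T}Yx = (Y^{\mathrm T}b)^{\mathrm T}x$. This reduces the quadratic first term to a linear function in $x$, with coefficient vector $Y^{\mathrm T}b$, valid for every $x\in K$.

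For the second term, I would use the binary property. Since $x_i\in\{0,1\}$ gives $x_i^{2}=x_i$, we have $x^{\mathrm T}\Diag(z)x=\sum_i z_i x_i^{2}=\sum_i z_i x_i = z^{\mathrm T}x$. Adding the two pieces yields $x^{\mathrm T}Qx=(Y^{\mathrm T}b+z)^{\mathrm T}x=c^{\mathrm T}x$ for all $x\in K$, which is exactly the statement that $c=Y^{\mathrm T}b+z$ is a linearization vector of $Q$.

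There is no real obstacle here: the result is essentially a bookkeeping identity that isolates how each structural property of $K$ flattens one of the two summands of $Q$ into a linear expression. The only small point to be careful about is the direction of the argument: we are not claiming that every linearizable $Q$ has this form, only that matrices of this form are automatically linearizable, so no converse or rank considerations are needed.
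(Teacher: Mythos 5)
Your proof is correct and is essentially the paper's own argument: the paper's one-line proof likewise uses $Bx=b$ to turn $x^{\mathrm T}B^{\mathrm T}Yx$ into $b^{\mathrm T}Yx$ and the binary constraint $x_i^2=x_i$ to turn $x^{\mathrm T}\Diag(z)x$ into $z^{\mathrm T}x$. You have simply spelled out the two steps that the paper compresses into a single chain of equalities.
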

\begin{proof}
For any $x \in K$, see \eqref{BQP_K}, we have $x^{\mathrm T}Qx =  x^{\mathrm T}(B^{\mathrm T}Y + \Diag(z))x  = (b^{\mathrm T}Y  + z^{\mathrm T})x  = c^{\mathrm T}x$.
\end{proof}
	
The next result shows that adding redundant equality constraints to the system $Bx=b$ does not generate more linearizable matrices in Lemma \ref{poly_gen1}.
\begin{lemma} \label{lemma_BT}
Consider the  BQP  \eqref{poly_quad}.	Assume the equation $a^{\mathrm T}x = d$ is implied by the system $Bx=b$, i.e.,
$a = B^{\mathrm T}\alpha \in \mathbb{R}^{m}$ and $d = b^{\mathrm T}\alpha  \in \mathbb{R}$ for some $\alpha \in \mathbb{R}^{n}$.
For any $Y \in \mathbb{R}^{n \times m}, y \in \mathbb{R}^{m}$, the matrix
\[
Q= \left(\begin{matrix}
	B \\ a^{\mathrm T}
	\end{matrix} \right)^{\mathrm T}
 \left(\begin{matrix}
	Y \\ y^{\mathrm T}
	\end{matrix} \right)
\]
is linearizable with linearization vector $c =  Y^{\mathrm T}b + dy \in \mathbb{R}^{m}$.
	\end{lemma}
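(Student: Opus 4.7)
The plan is to reduce Lemma \ref{lemma_BT} to Lemma \ref{poly_gen1} applied to the original constraint matrix $B$, by absorbing the redundant row $a^{\mathrm T}$ into the auxiliary matrix $Y$. First I would expand the block product in the definition of $Q$ to obtain
$$Q = \begin{pmatrix} B^{\mathrm T} & a \end{pmatrix} \begin{pmatrix} Y \\ y^{\mathrm T} \end{pmatrix} = B^{\mathrm T}Y + a\, y^{\mathrm T}.$$

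Next, I would use the redundancy hypothesis $a = B^{\mathrm T}\alpha$ to rewrite the rank-one term as $a\, y^{\mathrm T} = B^{\mathrm T}(\alpha y^{\mathrm T})$, which yields $Q = B^{\mathrm T}(Y + \alpha y^{\mathrm T})$. Setting $\tilde Y := Y + \alpha y^{\mathrm T} \in \mathbb{R}^{n \times m}$, the matrix is of the form $Q = B^{\mathrm T}\tilde Y + \Diag(0)$ covered by Lemma \ref{poly_gen1}, so $Q$ is linearizable with linearization vector $\tilde Y^{\mathrm T} b$. Simplifying,
$$\tilde Y^{\mathrm T} b \;=\; Y^{\mathrm T}b + (\alpha y^{\mathrm T})^{\mathrm T}b \;=\; Y^{\mathrm T}b + (\alpha^{\mathrm T}b)\,y \;=\; Y^{\mathrm T}b + dy,$$
using the second part of the redundancy hypothesis, $d = b^{\mathrm T}\alpha$. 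This is precisely the claimed linearization vector $c$.

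There is no serious obstacle: the statement is an algebraic identity asserting that the extra row $a^{\mathrm T}$ contributes nothing outside the row space of $B$. The only content worth highlighting is the rewrite $a\, y^{\mathrm T} = B^{\mathrm T}(\alpha y^{\mathrm T})$, which is the formal reason why appending a redundant equality to $Bx=b$ cannot enlarge the family of matrices certified as linearizable by Lemma \ref{poly_gen1}.
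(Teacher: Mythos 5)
Your proposal is correct and follows essentially the same route as the paper: expand the block product to $B^{\mathrm T}Y + ay^{\mathrm T}$, absorb $ay^{\mathrm T} = B^{\mathrm T}\alpha y^{\mathrm T}$ into $B^{\mathrm T}(Y+\alpha y^{\mathrm T})$, and invoke Lemma \ref{poly_gen1}. You even spell out the final simplification $(Y+\alpha y^{\mathrm T})^{\mathrm T}b = Y^{\mathrm T}b + dy$, which the paper leaves implicit.
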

	\begin{proof}
From
 $$\left(\begin{matrix}
	B \\ a^{\mathrm T}
	\end{matrix} \right)^{\mathrm T}
 \left(\begin{matrix}
	Y \\ y^{\mathrm T}
	\end{matrix} \right)= B^{\mathrm T}Y + ay^{\mathrm T} =  B^{\mathrm T}Y + B^{\mathrm T}\alpha y^{\mathrm T} = B^{\mathrm T}(Y + \alpha y^{\mathrm T}) \in \mathbb{R}^{m \times m}$$
it follows that  $B^{\mathrm T}(Y + \alpha y^{\mathrm T})$
is linearizable with linearization vector $c =  (Y + \alpha y^{\mathrm T})^{\mathrm T}b$.
\end{proof}
	
	A matrix $M\in \mathbb{R}^{m \times m}$ is called a weak sum matrix if we can find vectors $a, b\in \mathbb{R}^{m}$ such that $M_{i,j} = a_{i} + b_{j}$ for every $i\neq j$ and $i,j = 1,\ldots,m$.
In particular, if $M$ is a symmetric weak sum matrix, then we can assume $a=b$ in the definition.
	If $e^{\mathrm T}x = \sum_{i}x_{i}$ is a constant for every $x \in K$, then every weak sum matrix in the corresponding BQP is linearizable, see \cite{punnen2018representations}.
 In such case, it follows  from Lemma \ref{lemma_BT} that symmetric weak sum matrices can be represented as symmetrized matrices from  Lemma \ref{poly_gen1}.
 In particular, we have the following corollary.
	
	\begin{corollary}
Consider the  BQP  \eqref{poly_quad}. Assume $Bx=b$ implies that $e^{\mathrm T}x$ is a constant.
 Then every symmetric weak sum matrix $M$ can be written as $M = B^{\mathrm T}Y + Y^{\mathrm T}B  + \Diag(z)$ for some $Y$ and $z$.
 \end{corollary}
	\begin{proof}
	 By assumption, we have $e = B^{\mathrm T}\alpha \in \mathbb{R}^{m}$ for some $\alpha \in \mathbb{R}^{n}$. Since $M$ is a symmetric weak sum matrix, we can write $M = ae^{\mathrm T} + ea^{\mathrm T} + \Diag(z)\in \mathbb{R}^{m \times m}$ for some vectors $a\in \mathbb{R}^{m},z\in \mathbb{R}^{m}$ and all-ones vector $e\in \mathbb{R}^{m}$. Let $Y = \alpha a^{\mathrm T} \in \mathbb{R}^{n \times m}$. Then $$B^{\mathrm T}Y + Y^{\mathrm T}B  + \Diag(z) = B^{\mathrm T}\alpha a^{\mathrm T} + a\alpha^{\mathrm T}B  + \Diag(z) =e a^{\mathrm T} + ae^{\mathrm T}  + \Diag(z) = M. $$
	\end{proof}
	
 Erdo{\u{g}}an and Tansel \cite{erdougan2011two} identify an additively decomposable class of costs for the quadratic assignment problem that provide  a set of linearizable matrices for the QAP. We verified numerically that the linearizable matrices from  \cite{erdougan2011two}  can be written as $B^{\mathrm T}Y + Y^{\mathrm T}B  + \Diag(z)$ for some small instances with $n \leq 10$.

\section{General bounding approaches   }\label{sec_sos}

We  present here an equivalent reformulation of the BQP \eqref{poly_quad} and list several possible bounding approaches for this reformulation.
The new equivalent reformulation of the BQP is  also the basis for deriving our bounding scheme  in the next section.

Let $Q_{1},\ldots,Q_{k}$ be linearizable matrices for a given BQP with linearization vectors $c_{1},\ldots,c_{k}$, respectively.
For example,  linearizable matrices can be obtained from Lemma \ref{poly_gen1} for any binary quadratic problems, or from Proposition \ref{main_basis} for the quadratic shortest path problem.
Define the linear operator $\mathcal{A}: \mathbb{R}^{k} \rightarrow \mathbb{S}^{m}$ by $\mathcal{A}(\alpha) := \sum_{i=1}^{k}\alpha_{i}Q_{i}$ and $C := \begin{bmatrix}
c_{1},\ldots,c_{k}
\end{bmatrix} \in \mathbb{R}^{m \times k}$. Clearly,  $C\alpha$ is a linearization vector of the linearizable cost matrix $\mathcal{A}(\alpha)$, for any $\alpha \in \mathbb{R}^{k}$.

Let $f(x) = x^{\mathrm T}Qx$ and $h_{\alpha,\beta}(x) := x^{\mathrm T}\mathcal{A}(\alpha)x + \beta$, where $\alpha \in \mathbb{R}^{k}$, $\beta \in \mathbb{R}$. Let us reformulate the binary quadratic optimization problem \eqref{poly_quad} equivalently as
\begin{equation}\label{poly_refor2}
\begin{array}{ll}
\sup_{\alpha,\beta} & \min_{x \in K} h_{\alpha,\beta}(x) \\[1.ex]
{\rm s.t.}  &  f -h_{\alpha,\beta} > 0, \;\; \forall x \in K .
\end{array}
\end{equation}
\begin{theorem}\label{BQP_eq}
	The binary quadratic program \eqref{poly_quad} is equivalent to \eqref{poly_refor2}.
\end{theorem}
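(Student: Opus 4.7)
The plan is to establish that \eqref{poly_refor2} and \eqref{poly_quad} share the same optimal value, denoted $f^{*} := \min_{x \in K} x^{\mathrm T} Q x$. Writing $v$ for the optimal value of \eqref{poly_refor2}, I will prove $v \leq f^{*}$ by a straightforward weak-duality argument and $v \geq f^{*}$ by exhibiting a one-parameter family of feasible points of \eqref{poly_refor2} whose inner minima approach $f^{*}$.

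For the inequality $v \leq f^{*}$, suppose $(\alpha,\beta)$ is feasible for \eqref{poly_refor2}, so that $f(x) > h_{\alpha,\beta}(x)$ for every $x \in K$. Let $x^{\star} \in \arg\min_{x\in K} f(x)$, so that $f(x^{\star}) = f^{*}$. Then
\[
\min_{x \in K} h_{\alpha,\beta}(x) \,\leq\, h_{\alpha,\beta}(x^{\star}) \,<\, f(x^{\star}) \,=\, f^{*}.
\]
Since this holds for every feasible $(\alpha,\beta)$, taking the supremum yields $v \leq f^{*}$.

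For the reverse inequality $v \geq f^{*}$, consider the trivial choice $\alpha = 0 \in \mathbb{R}^{k}$, which gives $\mathcal{A}(0) = 0$ and $h_{0,\beta}(x) \equiv \beta$, a constant function. The constraint $f(x) > h_{0,\beta}(x) = \beta$ on $K$ is then satisfied precisely when $\beta < f^{*}$, and for every such $\beta$ we have $\min_{x \in K} h_{0,\beta}(x) = \beta$. Hence
\[
v \,\geq\, \sup\{\, \beta \;|\; \beta < f^{*}\,\} \,=\, f^{*}.
\]
Combining the two inequalities yields $v = f^{*}$, i.e.\ the two problems have the same optimal value.

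The argument is entirely formal, and in truth there is no real obstacle: the only subtlety worth flagging is that the strict inequality in the constraint $f - h_{\alpha,\beta} > 0$ keeps the supremum from being attained at any single $(\alpha,\beta)$, so the equivalence should be read as equality of optimal values rather than attainment; replacing the strict by a non-strict inequality would make the sup attained by $(\alpha,\beta) = (0, f^{*})$ without changing the value. Note also that the linearizable matrices $Q_{1},\ldots,Q_{k}$ and the operator $\mathcal{A}$ play no role in this identity; they merely guarantee that for any feasible $(\alpha,\beta)$ the quadratic $h_{\alpha,\beta}$ agrees on $K$ with the affine function $(C\alpha)^{\mathrm T} x + \beta$, a fact that becomes essential only in the next step, when the intractable non-negativity condition $f - h_{\alpha,\beta} \geq 0$ on $K$ is relaxed to an SOS certificate.
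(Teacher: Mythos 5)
Your proof is correct and follows essentially the same route as the paper's: the choice $\alpha=0$ with $\beta$ approaching $f^{*}$ from below for the lower bound, and the observation that strict positivity of $f-h_{\alpha,\beta}$ on $K$ forces $\min_{x\in K}h_{\alpha,\beta}(x)\leq f^{*}$ for the upper bound. Your added remark that the supremum is not attained because of the strict inequality is a sensible clarification, but the argument itself is the paper's.
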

\begin{proof}
	Let $x^{*}$ be an optimal solution of \eqref{poly_quad} with optimal value $f^{*}$. If $\alpha = 0$ and $\beta = f^{*} - \epsilon$ for $\epsilon > 0$, then they are feasible for \eqref{poly_refor2} as $f -h_{\alpha,\beta} > 0$ for  $x \in K$. As  $f^{*} - \epsilon = \min_{x \in K} h_{\alpha,\beta}(x)$, taking $\epsilon \rightarrow 0$, we conclude that the optimal value of \eqref{poly_refor2} is at least $f^{*}$. Conversely, $f(x) -h_{\alpha,\beta}(x) > 0$ on $K$ implies that $\min_{x \in K} h_{\alpha,\beta}(x)$ is at most $f^{*}$. This shows the equivalence between \eqref{poly_quad} and \eqref{poly_refor2}.
\end{proof}

 We note that Theorem \ref{BQP_eq} holds for any choice of the linearizable matrices in the definition of $\mathcal{A}$.
 Let
 \begin{equation} \label{Kbar}
 \bar{K}=\{ x\geq 0: {B}x={b}\}.
 \end{equation}
From now on we assume w.l.g.~that  ${B}x={b}$ includes also one as an upper bound on $x_i$ for all $i$.
For the inner minimization problem of \eqref{poly_refor2} we have:
\begin{align}
	\underset{x \in \mathbb{R}^{m}}{\min}\{ h_{\alpha,\beta}(x)  \;|\; x \in K \} & = \underset{x \in \mathbb{R}^{m}}{\min}  \{ (C\alpha)^{\mathrm T}x + \beta \;|\; x \in K \} \nonumber\\
& \geq \underset{x \in \mathbb{R}^{m}}{\min} \{ (C\alpha)^{\mathrm T}x + \beta \;|\; x\in \bar{K} \} \label{inn_ineq}\\
& = \underset{y\in \mathbb{R}^{n}}{\max}  \{ b^{\mathrm T}y + \beta \;|\;  B^{\mathrm T}y \leq C\alpha  \}. \nonumber
\end{align}
Here, the first equality exploits the linearizability of $\mathcal{A}(\alpha)$, and the last equality follows from strong duality of linear programming.
In the case that $K$ is the convex hull of the feasible integer points,  like in the case of the linear assignment problem and the shortest path problem  on directed acyclic graphs,
  the above inequality turns to be equality.   However the inequality in \eqref{inn_ineq} can be strict in general.
  Nevertheless, the above  leads to the following optimization problem:
\begin{equation}\label{poly_refor3}
\begin{array}{ll}
\sup_{\alpha,\beta,y} &  b^{\mathrm T}y + \beta \\[1.5ex]
{\rm s.t.}  &  f -h_{\alpha,\beta} > 0, \;\; \forall x \in K \\[1.5ex]
& B^{\mathrm T}y \leq C\alpha,
\end{array}
\end{equation}
that can be exploited to obtain bounds for the BQP \eqref{poly_quad}.
Moreover, several approaches may be used  to compute bounds  for the BQP  \eqref{poly_quad} by exploiting \eqref{poly_refor3}.
For instance, one can  relax the condition $f - h_{\alpha,\beta}\geq 0$ on $K$ by a sum of squares (SOS) decomposition of $f -h_{\alpha,\beta}$.
  In e.g., \cite{lasserre2001global,nie2007complexity,laurent2009sums},
  the authors do not consider the linearization problem and construct hierarchies of approximations based on sum of squares decompositions
  for the following optimization problem:
  \[
  \sup \{ \beta \;|\; f(x)-\beta \geq 0, \; \forall x \in K \},
  \]
where $f(x)$ is a multivariate polynomial and $K$ the closed semialgebraic set.

Buchheim et al., \cite{buchheim2015quadratic} propose a semidefinite programming relaxation for binary quadratic programs.
Their approach can be viewed as a special case of \eqref{poly_refor2}, where $\mathcal{A}(\alpha)$ is a diagonal matrix, and
$Q-\mathcal{A}(\alpha) \succeq 0$ is the positivity certificate.
Alternatively, one can use a  simple positivity certificate to derive linear programming lower bounds like we do in the next section.

\section{The linearization-based scheme}\label{sec:lbb}

In this section, we consider a simple but efficient positivity certificate using the fact that both $f$ and $h_{\alpha,\beta}$ are quadratic functions.
This yields an efficient lower bounding scheme. Here, we use the same notation as in the  previous section, namely
$f(x) = x^{\mathrm T}Qx$ and $h_{\alpha,\beta}(x) = x^{\mathrm T}\mathcal{A}(\alpha)x + \beta$.

Note that if $\mathcal{A}(\alpha)\leq Q$ and $\beta\leq 0$, then  $f-h_{\alpha,\beta} \geq 0$ for all $x\in K$. This leads to the following relaxation for the BQP \eqref{poly_quad}:
\begin{equation}\label{poly_lbb}
\begin{array}{ll}
& \underset{\alpha,y}{\max} \{ b^{\mathrm T}y \;|\;  B^{\mathrm T}y \leq C\alpha, \; \mathcal{A}(\alpha)\leq Q\},
\end{array}
\end{equation}
where we have removed the redundant scalar variable $\beta$. We call relaxation \eqref{poly_lbb} the linearization-based relaxation.
Thus, a linearization-based bound (LBB) is  a solution of one linear programming problem.
 However, the quality of so obtained bound depends on the choice of $\mathcal{A}(\alpha)$.
For example, one can take  linearizable matrices from Lemma \ref{poly_gen1} for any BQP.
 De Meijer and Sotirov \cite{deMeijer2019} choose a particular linearizable matrix for the quadratic cycle problem, which enables them to compute strong bounds fast.

Billionnet et al.~\cite{billionnet2009improving} provide a method to  reformulate  a binary quadratic program
into an equivalent binary quadratic program  with a convex quadratic objective function.
Their approach results with the tightest  convex bound.
However the authors from \cite{billionnet2009improving} need to solve a semidefinite programming relaxation in order to compute their bound,
which makes their approach computationally more expensive than the approach we present here.

Next, we consider reformulations of the BQP and their influence to  the  linearization-based bounds.
For any skew-symmetric matrix $S$ and vector $d$, we can reformulate the objective function of \eqref{poly_quad}
as $f(x) =  x^{\mathrm T}(Q+S+\Diag(d))x - d^{\mathrm T}x$ using the fact that $x\in K$ is binary.
This follows from the fact that $x^{\mathrm T}Sx = 0$ and $x_{i}^{2}=x_{i}$, see also Theorem 2.4 in \cite{punnen2018representations}.
Thus, we obtain  an equivalent representation of  \eqref{poly_quad}.
Since we have an extra linear term $d^{\mathrm T}x$, we  underestimate
 $f(x)$ by $h_{\alpha,\gamma}(x) := x^{\mathrm T}\mathcal{A}(\alpha)x + \gamma^{\mathrm T}x$. So both $f(x)$ and $h_{\alpha,\gamma}(x)$ have an extra linear term now.
 Then $x^{\mathrm T}\mathcal{A}(\alpha)x + \gamma^{\mathrm T}x  \leq f(x)$ if $\mathcal{A}(\alpha) \leq Q+S+\Diag(d)$ and $\gamma \leq - d$.
  Under this setting, we can derive a bound in the same way as in \eqref{poly_lbb}. In particular, we have
	\begin{align}
	& \max_{\alpha,\gamma} \left\{  \min_{x \in K} (C\alpha + \gamma)^{\mathrm T}x  \;|\; \mathcal{A}(\alpha) \leq Q+S+\Diag(d),\;  \gamma \leq - d \right \} \nonumber \\
	& =   \max_{\alpha,\gamma,y} \left \{ b^{\mathrm T}y  \;|\; B^{\mathrm T}y \leq C\alpha + \gamma, \; \mathcal{A}(\alpha) \leq Q+S+\Diag(d), \; \gamma \leq - d \right \} \label{invariant_for}.
	\end{align}
Below we show that the bound \eqref{invariant_for},  is invariant under a reformulation of the objective function
 when  $\mathcal{A}(\alpha)$ is in the special form, i.e., contains  every skew-symmetric matrix.
Note that skew-symmetric matrices are linearizable.

\begin{proposition}\label{invariant_lemma}	
Assume that for any skew-symmetric matrix $S \in \mathbb{R}^{m \times m}$ and vector $d \in \mathbb{R}^{m}$
there exists  $\alpha \in \mathbb{R}^{k}$ such that $\mathcal{A}(\alpha) = S + \Diag(d)$ in \eqref{invariant_for}. For the binary quadratic optimization problem
	\begin{equation} \label{invariant_obj}
	\min_{x \in K} x^{\mathrm T}(Q+S+\Diag(d))x - d^{\mathrm T}x,
	\end{equation}
the bound \eqref{invariant_for} does not depend on the choice of the skew-symmetric matrix $S$ or the vector $d$.
\end{proposition}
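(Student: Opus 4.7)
The plan is to establish invariance by passing to the equivalent min--max form
\[
\sup_{\alpha,\gamma}\left\{\,\min_{x\in K}(C\alpha+\gamma)^{\mathrm T}x \;:\; \mathcal{A}(\alpha)\le Q+S+\Diag(d),\; \gamma\le -d\,\right\}
\]
of \eqref{invariant_for}---this is exactly the LP-duality step used just above \eqref{invariant_for}---and then exhibiting, for any two reformulations $(S_1,d_1)$ and $(S_2,d_2)$, a value-preserving bijection between the corresponding feasible sets. Since the construction will be symmetric in the two reformulations, this forces equality of the two suprema.

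To build the bijection, I would apply the hypothesis to $\hat S:=S_2-S_1$ (skew-symmetric) and $\hat d:=d_2-d_1$, obtaining $\hat\alpha\in\mathbb{R}^k$ with $\mathcal{A}(\hat\alpha)=\hat S+\Diag(\hat d)$. For any $(\alpha_1,\gamma_1)$ feasible for $(S_1,d_1)$, set $\alpha_2:=\alpha_1+\hat\alpha$ and $\gamma_2:=\gamma_1-\hat d$. Linearity of $\mathcal{A}$ absorbs the shift in the matrix inequality, and the definition of $\gamma_2$ together with $\gamma_1\le -d_1$ yields $\gamma_2\le -d_2$ componentwise, so $(\alpha_2,\gamma_2)$ is feasible for $(S_2,d_2)$.

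For the objective I would use that $C\hat\alpha$ is, by construction of $\mathcal{A}$ and $C$, a linearization vector of $\mathcal{A}(\hat\alpha)$. For every $x\in K$, since $x$ is binary and $\hat S$ is skew-symmetric, $x^{\mathrm T}\hat Sx=0$ and $x^{\mathrm T}\Diag(\hat d)x=\hat d^{\mathrm T}x$, hence
\[
(C\hat\alpha)^{\mathrm T}x \;=\; x^{\mathrm T}\mathcal{A}(\hat\alpha)x \;=\; (d_2-d_1)^{\mathrm T}x.
\]
Substituting, $(C\alpha_2+\gamma_2)^{\mathrm T}x=(C\alpha_1+\gamma_1)^{\mathrm T}x$ for all $x\in K$, so the inner minima---and hence the two suprema---agree.

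The main subtlety is precisely this last identity: $C\hat\alpha$ and $d_2-d_1$ need not coincide as vectors in $\mathbb{R}^m$, only as linear functionals on $K$. Working in the $\min_{x\in K}$ form is exactly what makes this weaker agreement sufficient; had I kept the LP-dual variable $y$, I would instead need to show $C\hat\alpha-(d_2-d_1)\in\operatorname{range}(B^{\mathrm T})$ with the associated dual multiplier orthogonal to $b$, a small detour that uses the standing assumption $\operatorname{conv}(K)=\{x:Bx=b,\,x\ge 0\}$.
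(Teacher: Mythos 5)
Your proposal is correct, and the core construction is the same as the paper's: apply the hypothesis to $\hat{S}=S_2-S_1$, $\hat{d}=d_2-d_1$ to get $\hat{\alpha}$, then shift $\alpha\mapsto\alpha+\hat{\alpha}$ and $\gamma\mapsto\gamma-\hat{d}$ and check feasibility by symmetry. The one genuine difference is how the objective is shown to be preserved. The paper keeps the LP-dual variable $y$ and asserts $C\alpha^{**}=C\alpha^{*}+d_2-d_1$ as an identity of vectors, justified only by the convention ``w.l.o.g.\ the linearization vector of $\Diag(d)$ is $d$'' --- which is slightly delicate, since $C\hat{\alpha}$ is whatever combination of the chosen columns $c_1,\dots,c_k$ happens to arise, and linearization vectors are not unique. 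You instead work with the inner minimum over $K$ and use only the weaker, automatically true fact that $(C\hat{\alpha})^{\mathrm T}x=x^{\mathrm T}\mathcal{A}(\hat{\alpha})x=(d_2-d_1)^{\mathrm T}x$ for all $x\in K$; your closing remark correctly identifies that transferring this to the $y$-form needs $\conv(K)=\{x: Bx=b,\ x\ge 0\}$, which is indeed the paper's standing assumption from Section 4. So your route buys a cleaner justification of the key equality at no extra cost, while the paper's version is shorter but leans on a normalization of $C$ that it does not fully spell out.
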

\begin{proof}
Let $S:=S_1$ and $d:=d_1$ in the BQP \eqref{invariant_obj}, and
$(\alpha^{*},\gamma^{*},y^{*})$ be a feasible solution of \eqref{invariant_for}, whose objective value $f^{*}$ is given by
	\begin{equation}\label{invar1}
	\begin{array}{ll}
	f^{*} = \underset{y}{\max} \{ b^{\mathrm T}y \;|\; B^{\mathrm T}y \leq C\alpha^{*} + \gamma^{*} \}.\\
	\end{array}
	\end{equation}
	Let $S:=S_{2}$ and $d:=d_{2}$ in the BQP \eqref{invariant_obj}. We now construct a feasible solution of \eqref{invariant_for} for \eqref{invariant_obj} having the objective value $f^{*}$.
 By assumption, we can find $\hat{\alpha} \in \mathbb{R}^{k}$ such that $\mathcal{A}(\hat{\alpha}) = S_{2}-S_{1} + \Diag(d_{2}-d_{1})$.
 Let $\alpha^{**} = \alpha^{*}+\hat{\alpha}$ and $\gamma^{**} = \gamma^{*}-d_{2}+d_{1}$.  Note that $C\alpha^{**} = C\alpha^{*} + d_{2} - d_{1}$,
  and thus $C\alpha^{**} +\gamma^{**} = C\alpha^{*} +\gamma^{*}$.
  Here we assume w.l.g.~that  the linearization vector of a diagonal matrix $\Diag(d)$ is given by $d$.
   To see that $(\alpha^{**},\gamma^{**},y^{*})$ is a feasible solution of \eqref{invariant_for}, we check
	$$\begin{array}{cll}
	B^{\mathrm T}y^{*} &\leq C\alpha^{*} + \gamma^{*} &= C\alpha^{**} + \gamma^{**},\\
	\mathcal{A}(\alpha^{**})&=\mathcal{A}(\alpha^{*}) + \mathcal{A}(\hat{\alpha}) & \leq Q+S_{2}+\Diag(d_{2}), \;\\
	\gamma^{**} &= \gamma^{*}-d_{2}+d_{1} & \leq - d_{2}.
	\end{array}$$
	Furthermore, the objective value of this solution is clearly $f^{*} = b^{\mathrm T}y^{*}$.
\end{proof}

We remark here that in the case that  $\mathcal{A}(\alpha)$  does not contain skew-symmetric matrices, the linearization-based bound \eqref{invariant_for} depends on $S$.
Note that in Proposition \ref{invariant_lemma} we do not specify a non-skew-symmetric part of $\mathcal{A}({\alpha})$.

Let us now restrict to  $B^{\mathrm T}Y +\Diag(z)$ where $Y \in \mathbb{R}^{n \times m}$ and $z \in \mathbb{R}^{m}$, see  Lemma \ref{poly_gen1}.
Now, for any skew-symmetric $S \in \mathbb{R}^{m \times m}$ we have that   $B^{\mathrm T}Y + S+  \Diag(z)$ is linearizable with linearization vector $Y^{\mathrm T}b  + z$.
 This yields the following linearization-based relaxation:
\begin{equation}\label{invariant_lbb1}
\underset{Y,S,z,y}{\max} \{ b^{\mathrm T}y \;|\;     B^{\mathrm T}y \leq Y^{\mathrm T}b  + z, \; B^{\mathrm T}Y + S+  \Diag(z) \leq Q, \; S+S^{\mathrm T} = 0 \}.
\end{equation}
This relaxation satisfies the assumption of Proposition \ref{invariant_lemma}. Thus, we may assume without loss of generality that $Q$ is symmetric for \eqref{invariant_lbb1}.
By applying Proposition \ref{invariant_lemma}, the next result shows that the variable $S$ can be eliminated from \eqref{invariant_lbb1} to obtain an equivalent relaxation with less variables and constraints.

\begin{proposition}
Assume $Q$ is symmetric. The relaxation \eqref{invariant_lbb1} is equivalent to
\begin{equation}\label{poly_lbb1}
\underset{Y,z,y}{\max} \{ b^{\mathrm T}y \;|\;    B^{\mathrm T}y \leq 2Y^{\mathrm T}b  + z, \; B^{\mathrm T}Y + Y^{\mathrm T}B  + \Diag(z) \leq Q \}.
\end{equation}
\end{proposition}
\begin{proof}
Let $(Y,S,z,y)$ be a feasible solution of \eqref{invariant_lbb1}. Then $\frac{1}{2}(B^{\mathrm T}Y + Y^{\mathrm T}B) +  \Diag(z) \leq Q$ as $Q$ is symmetric, and thus $(\frac{1}{2}Y,z,y)$ is feasible for \eqref{poly_lbb1}. Conversely, let $(Y,z,y)$ be feasible for \eqref{poly_lbb1}.
Let $S= Y^{\mathrm T}B -B^{\mathrm T}Y$. Then $S$ is skew-symmetric, and
\[
2B^{\mathrm T}Y + S  + \Diag(z) =  B^{\mathrm T}Y + Y^{\mathrm T}B  + \Diag(z)\leq Q.
\]
Therefore $(2Y,S,z,y)$ is a feasible solution for \eqref{invariant_lbb1}.
\end{proof}

In what follows, we consider the following two cases. In both cases, we can assume $Q$ is symmetric without loss of generality.
\begin{itemize}
\item
We replace
$\mathcal{A}(\alpha)$ by $B^{\mathrm T}Y +  Y^{\mathrm T}B+  \Diag(z)$, and $C\alpha$ by $2Y^{\mathrm T}b  + z$
 in  \eqref{poly_lbb}.  Thus, we have the linearization-based relaxation \eqref{poly_lbb1}.
 The obtained lower bound, denoted by $v_{LBB'}$, is the optimal solution of  \eqref{poly_lbb1}.

 \item If we know the spanning set of linearizable matrices $\{Q_{1},\ldots,Q_{k}\}$, then we denote by $v_{LBB^{*}}$ the corresponding  linearization-based bound.
\end{itemize}

\section{The LBB and related bounds} \label{sec:hs}

In Section \ref{sect:GLB} we present the Generalized Gilmore-Lawler bounding scheme, that is a well known iterative lower bounding scheme for binary quadratic problems.
 We  show that the GGL bounds are dominated by  our linearization-based bound $v_{LBB'}$.
 In Section \ref{sect:RLT}, we compare our linearization-based bounds with the bounds obtained from  the first level RLT relaxation proposed by Adams and Sherali \cite{adams1986tight, adams1990linearization}.
In Section \ref{sect:strongest} we show the strength of our strongest linearization-based bound, and in Section \ref{sect:extendLin} we introduce extended-linearization
based bounds.

\subsection{The Generalized Gilmore-Lawler bounding scheme } \label{sect:GLB}

The  Generalized Gilmore-Lawler bounding scheme   is implemented for many optimization problems, including the quadratic assignment problem  \cite{hahn1998lower,carraresi1992new},
the quadratic shortest path problem \cite{rostami2018quadratic}, the quadratic minimum spanning tree problem \cite{Rostami:QMST}.

	Let $Q = [ {q}_{1},\ldots, {q}_{m}] \in \mathbb{R}^{m \times m}$ be the given quadratic cost matrix, see \eqref{poly_quad}.
 Denote by $I_{k}$ the $k$-th column of the identity matrix of size $m$.
  Let $\bar{y}_k \in \R^{n}$ and $\bar{z}_{k} \in \R$ be an optimal solution of the following linear program:
	\begin{align}
	\underset{{y}_{k} \in \R^{n}, z_{k} \in \R}{\max} &\{ \;  b^{\mathrm T} {y}_{k} + z_{k} \;|\;  B^{\mathrm T} {y}_{k} + I_{k}z_{k}\leq {q}_{k} \; \}, \label{GLL_dual}
	\end{align}
	for each $k = 1,\ldots,m$. Collect all $\bar{y}_{k}$ in matrix  $\bar{Y} \in \mathbb{R}^{n \times m}$, and  all $\bar{z}_{k}$  in vector  $\bar{z} \in \mathbb{R}^{m}$.
Define
	\begin{equation}\label{gl_qc}
	\begin{array}{cl}
	\bar{c} &=  \bar{Y}^{\mathrm T}b  + \bar{z} \in \mathbb{R}^{m}, \\
	\bar{Q} &=  B^{\mathrm T}\bar{Y} + \Diag(\bar{z}) \in \mathbb{R}^{m \times m}.
	\end{array}
	\end{equation}
From Lemma \ref{poly_gen1}, we know that  $\bar{c}$ is a linearization vector of  $\bar{Q}$.
The feasibility of \eqref{GLL_dual} implies that $\bar{Q} \leq Q$, and thus
$\min_{x \in \bar{K}} \bar{c}^{\mathrm T}x$, where   $\bar{K}$ is given in \eqref{Kbar}
is a lower bound for the binary quadratic program \eqref{poly_quad}.
Moreover, this bound is known as the Gilmore-Lawler (GL) type bound and it is implemented for many BQP problems including the QAP and the QSPP.
The GL bound was originally  introduced  for the QAP,  see \cite{gilmore1962optimal,lawler1963quadratic}.

Let us call the dual-update of $Q$ the following update of the objective $Q\leftarrow Q - \bar{Q}$.
The dual-update can be applied iteratively followed by some equivalent representation of the objective in order  to obtain an increasing sequence of lower bounds.
This iterative bounding scheme  is known as the Generalized Gilmore-Lawler  bounding scheme, and it is an important  lower bounding strategy for binary quadratic problems.
We describe this bounding scheme below.

	\begin{algorithm}[H]
		\caption{The Generalized Gilmore-Lawler bound}\label{GGL}
		\begin{algorithmic}[1]
			\State \textbf{Input:} The binary quadratic program \eqref{poly_quad}.
			\State \textbf{Output:} The GGL bound $v_{GGL}$.
			\State $i \leftarrow 0$, $Q_{0} \leftarrow Q$,  $c \leftarrow 0$
			\While{true}
			\State 	Set $Q_{i+1} \leftarrow Q_{i} - \bar{Q}_{i}$ and  $c \leftarrow c + \bar{c}_{i}$ from \eqref{gl_qc}\;
			\State 	Set $Q_{i+1} \leftarrow Q_{i+1} + S_{i+1},$ where $S_{i+1}$ is skew-symmetric\;
			
			\If{$||\bar{c}_{i}||=0$}
				\State break
			\EndIf
			\State \textbf{end if}
 \State 	$i$ $\leftarrow$ $i + 1$
			\EndWhile

			\State \textbf{end while}
			\State 	$v_{GGL} \leftarrow \min_{x \in \bar{K} } c^{\mathrm T}x$\;
		\end{algorithmic}
	\end{algorithm}

	Note that the skew-symmetric matrix $S_{i+1}$ in the algorithm yields an equivalent representation. For example, Frieze and Yadegar \cite{frieze1983quadratic}
pick a skew-symmetric matrix $S$ such that $Q+S$ is upper triangular, while Rostami et al., \cite{rostami2018quadratic} keep $Q+S$ symmetric. 
More sophisticated reformulation can also be found by exploiting the problem structure, see Hahn and Grant \cite{hahn1998lower} and de Meijer and Sotirov \cite{deMeijer2019}.
Bounds based on the dual-update are very competitive, see also \cite{carraresi1992new,assad1985lower,hahn1998branch}.
However, the quality of bounds  depends on the choice of the skew-symmetric matrix.

The  key observation here is that each iteration of the Generalized Gilmore-Lawler bounding strategy
is based on maximizing $\bar{c}_{k}$ over $y_{k}$ and $z_{k}$ in a `local' way, i.e., solving $m$ linear programs \eqref{GLL_dual} independently.
Then, followed by the dual update, the quadratic cost matrix is reshuffled by some skew-symmetric matrix, and the procedure is repeated.
 Instead, the linearization-based bound $v_{LBB'}$, see \eqref{poly_lbb1},
   is obtained by maximizing $\min_{x \in \bar{K} } \bar{c}^{\mathrm T}x$ over $Y$, $S$ and $z$ in a ``global" way.
   This means, $v_{LBB'}$ is  optimal in terms of the Generalized Gilmore-Lawler bound.
   The following proof, where we show that $v_{LBB'}$ is stronger than the Generalized Gilmore-Lawler bound $v_{GGL}$,  makes this  observation precise.

\begin{theorem}\label{GLL_lbb}
		$v_{GGL} \leq v_{LBB'}$.
\end{theorem}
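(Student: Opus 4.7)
The plan is to exhibit a feasible triple $(Y,z,y)$ for the LP \eqref{poly_lbb1} defining $v_{LBB'}$ whose objective value $b^{\mathrm T}y$ equals $v_{GGL}$. Since $x^{\mathrm T}Qx = x^{\mathrm T}\tfrac{1}{2}(Q+Q^{\mathrm T})x$ for every vector $x$, I first assume without loss of generality that $Q$ is symmetric, and denote the symmetric part of a matrix $M$ by $M^{s} := \tfrac{1}{2}(M+M^{\mathrm T})$.

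I would run Algorithm \ref{GGL} to completion, producing $\bar{Y}_0,\ldots,\bar{Y}_N$, $\bar{z}_0,\ldots,\bar{z}_N$, and skew-symmetric reshuffles $S_1,\ldots,S_{N+1}$; the aggregated cost vector is $c = \sum_{i} \bar{c}_i$ with $\bar{c}_i = \bar{Y}_i^{\mathrm T}b + \bar{z}_i$. By the linear programming duality already invoked in Section \ref{sec_sos}, $v_{GGL} = \min_{x \in K} c^{\mathrm T}x = \max\{b^{\mathrm T}y \mid B^{\mathrm T}y \leq c\}$; let $y^{*}$ attain this maximum. I then set $Y^{*} := \tfrac{1}{2}\sum_i \bar{Y}_i$ and $z^{*} := \sum_i \bar{z}_i$, so that $2(Y^{*})^{\mathrm T}b + z^{*} = c$ and the vector inequality $B^{\mathrm T}y^{*} \leq 2(Y^{*})^{\mathrm T}b + z^{*}$ is automatic.

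The main task is to verify the matrix inequality $B^{\mathrm T}Y^{*} + (Y^{*})^{\mathrm T}B + \Diag(z^{*}) \leq Q$. A direct computation using $\bar{Q}_i = B^{\mathrm T}\bar{Y}_i + \Diag(\bar{z}_i)$ shows that the left-hand side equals $\sum_i \bar{Q}_i^{s}$. Symmetrizing the recursion $Q_{i+1} = Q_i - \bar{Q}_i + S_{i+1}$, and using that $S_{i+1}$ vanishes under $(\cdot)^{s}$, gives the clean identity $Q_{i+1}^{s} = Q_i^{s} - \bar{Q}_i^{s}$; telescoping yields $\sum_{i=0}^{N} \bar{Q}_i^{s} = Q - Q_{N+1}^{s}$.

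The crux will be to show $Q_{N+1}^{s} \geq 0$ entrywise, which I plan to prove by induction on $i$. Feasibility of \eqref{GLL_dual} gives the column-wise bound $\bar{Q}_i \leq Q_i$, hence $\bar{Q}_i^{s} \leq Q_i^{s}$. The base case $Q_1^{s} = Q - \bar{Q}_0^{s} \geq 0$ follows from $\bar{Q}_0^{s} \leq Q^{s} = Q$, and the inductive step $Q_{i+1}^{s} = Q_i^{s} - \bar{Q}_i^{s} \geq 0$ is then immediate. Putting the pieces together, $(Y^{*}, z^{*}, y^{*})$ is feasible for \eqref{poly_lbb1} with objective value $b^{\mathrm T}y^{*} = v_{GGL}$, proving $v_{GGL} \leq v_{LBB'}$. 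The subtlety to watch is that the skew-symmetric reshuffles $S_{i+1}$ disappear from every equation involving $(\cdot)^{s}$, yet they still determine the iterates $Q_i$ against which $\bar{Q}_i \leq Q_i$ is tested; it is precisely the symmetrization trick that lets the local LP feasibility certificates aggregate into a single global one.
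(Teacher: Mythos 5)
Your proof is correct and follows essentially the same route as the paper's: symmetrize the dual-update recursion so the skew-symmetric reshuffles vanish, aggregate the iterates into $\tilde{Y}=\tfrac12\sum_i\bar{Y}_i$ and $\tilde{z}=\sum_i\bar{z}_i$, telescope to get $B^{\mathrm T}\tilde{Y}+\tilde{Y}^{\mathrm T}B+\Diag(\tilde{z})\leq Q$ from the nonnegativity of the final residual, and conclude by LP duality. The only cosmetic difference is that you phrase the nonnegativity of the residual as an induction over all iterates, whereas the paper only invokes it for the last one.
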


\begin{proof}	
Let $\bar{Q}_{i} = B^{\mathrm T}\bar{Y}_{i} + \Diag(\bar{z}_{i})$ $(i=0,\ldots,k-1)$ be the sequence of linearizable matrices from Algorithm \ref{GGL}.
Let $S_{0}$ be  the matrix of zeros.
For convenience, denote by $Q_{1},\ldots ,Q_{k-1}$ the cost matrix obtained right after line six in Algorithm 1.
Thus, $Q_{i} = Q_{i-1} +S_{i-1} - \bar{Q}_{i-1}$ in this notation.
  Define function $g(M) = (M+M^{\mathrm T})/2$. Then $g(M)$ is a zero matrix whenever $M$ is skew-symmetric.
		
From the feasibility of \eqref{GLL_dual}, it holds that $Q_{k} = Q_{k-1}+S_{k-1} - \bar{Q}_{k-1} \geq 0$ and thus
$g(Q_{k}) \geq 0$. As $Q_{k} = Q_{k-1} + S_{k-1} - \bar{Q}_{k-1}  = Q_{0} +  \sum_{i=0}^{k-1} (S_{i} - \bar{Q}_{i})$, we have
		\begin{equation*}
		\begin{array}{l}
		0 \leq g(Q_{k}) = g \big( Q_{0} +  \sum\limits_{i=0}^{k-1} (S_{i} - \bar{Q}_{i}) \big ) = g(Q_{0})- g( \sum\limits_{i=0}^{k-1}\bar{Q}_{i}),
		\end{array}
		\end{equation*}
from where it follows $g\big (\sum_{i=0}^{k-1}\bar{Q}_{i} \big ) \leq g(Q_{0})$.

Let $\tilde{Y} := \frac{1}{2}\sum_{i=0}^{k-1}\bar{Y}_{i}$ and $\tilde{z} := \sum_{i=0}^{k-1}\bar{z}_{i}$. It is easy to see that
\begin{equation*}
\begin{array}{ccc}
g \big (\sum\limits_{i=0}^{k-1}\bar{Q}_{i} \big ) = g \big (B^{\mathrm T}\sum\limits_{i=0}^{k-1}\bar{Y}_{i}+\Diag(\sum\limits_{i=0}^{k-1}\bar{z}_{i}) \big)
= B^{\mathrm T}\tilde{Y}+\tilde{Y}^{\mathrm T}B+\Diag(\tilde{z}).
\end{array}
\end{equation*}
Therefore $B^{\mathrm T}\tilde{Y}+\tilde{Y}^{\mathrm T}B+\Diag(\tilde{z}) \leq g(Q)$.
Note that $Q = g(Q)$ in $v_{LBB'}$.
 Since $c = \sum_{i=0}^{k-1}\bar{c}_{i} = 2\tilde{Y}^{\mathrm T}b  + \tilde{z}$, it holds that
\begin{equation*}
v_{LBB'} \geq \underset{y}{\max} \{ b^{\mathrm T}y \;|\;   B^{\mathrm T}y \leq 2\tilde{Y}^{\mathrm T}b  + \tilde{z} \} = \min_{x \in \bar{K} } c^{\mathrm T}x = v_{GGL}.
\end{equation*}
	\end{proof}
In the next section we relate $v_{LBB'}$ and the bound obtained from the first level RLT.

\subsection{The first level RLT bound} \label{sect:RLT}

The reformulation linearization technique proposed by Adams and Sherali \cite{adams1986tight, adams1990linearization}
generates a hierarchy of linear programming relaxations for binary quadratic programs.
It has been substantiated that this hierarchy generates tight relaxations even at the first level in many applications.

We show here that the linearization-based bound $v_{LBB'}$ coincides with the first level RLT bound
for optimization problems where the constraint $x \leq e$ is redundant for $\bar{K}=\{ x\geq 0:~Bx=b \}$,
see Lemma \ref{LBBisRLT}. If this is not the case, we establish a relation between those two bounds in Lemma \ref{notRedundant}.

The first level RLT relaxation for the binary quadratic  problem \eqref{poly_quad} is given
  as follows:
\begin{equation}\label{RLT}
\begin{array}{rrl}
v_{RLT_1}:=& \underset{x \in \mathbb{R}^{m}, X \in \mathcal{S}^{m} }{\min} &  \langle Q,X \rangle \\[1ex]
&&  (x,X)  \in \mathcal{F} \\[1ex]
&& e - x \geq 0 \\[1ex]
&& J - xe^{\mathrm  T} - ex^{\mathrm T} + X \geq 0 \\[1ex]
&& xe^{\mathrm T} - X \geq 0,
\end{array}
\end{equation}
where  $J$ is all-ones matrix, $\mathcal{S}^{m}$  denotes the set of symmetric matrices of order $m$, and
\begin{equation}\label{RLT_fea}
\mathcal{F}: = \left \{ (x,X) \in (\mathbb{R}^{m},\mathcal{S}^{m}) \;|\; Bx=b  ,\; BX = bx^{\mathrm T}, \; x = \diag(X), \;  x \geq 0, \;  X\geq 0 \right \}.
\end{equation}
Here the `diag' operator maps an $m\times m $ matrix to the $m$-vector given by its diagonal.

 Depending on the specific problem structure the constraint $x\leq e$ can be omitted without affecting its continuous relaxation.
For instance, this is the case when the BQP under consideration is the QAP, the QSPP on directed acyclic graphs, the quadratic cycle cover problem, 
the quadratic minimum spanning tree problem, etc.
 In such cases,  the first level RLT relaxation is given by
\begin{equation}\label{poly_RLT}
\begin{array}{rcl}
v_{RLT_1'} :=& \underset{x \in \mathbb{R}^{m}, X \in \mathcal{S}^{m} }{\min} & \langle Q,X \rangle \\
&\text{ s.t. }& (x,X)  \in \mathcal{F}.
\end{array}
\end{equation}
In general, the bound $v_{RLT_1'}$ is weaker than $v_{RLT_1}$. The next result shows that the linearization-based bound $v_{LBB'}$ is equivalent to $RLT_1'$.

\begin{lemma}\label{LBBisRLT}
Let  $x \leq e$ be redundant for $\bar{K}$, then $v_{LBB'} =v_{RLT_1'}=v_{RLT_1}$.
\end{lemma}
\begin{proof}
The proof follows directly from the dual of \eqref{poly_lbb1}. The Lagrangian function of \eqref{poly_lbb1} is given by
	\begin{equation*}
	\begin{array}{ll}
	{\mathcal L}(Y,z,y,x,X) &= b^{\mathrm T}y + \langle Q - B^{\mathrm T}Y -  Y^{\mathrm T}B -  \Diag(z), X \rangle + \langle 2Y^{\mathrm T}b  + z -  B^{\mathrm T}y, x \rangle \\
	&= \langle Q,X \rangle + \langle b - Bx, y \rangle + \langle 2bx^{\mathrm T} -BX - BX^{\mathrm T}, Y \rangle + \langle x - \diag(X),z \rangle,
	\end{array}
	\end{equation*}
	where $x \in \mathbb{R}^{m}$ and $X \in \mathcal{S}^{m}$. Thus the dual of \eqref{poly_lbb1} is
		\begin{equation}\label{poly_lbb1dual}
	\begin{array}{rl}
	& \underset{x \in \mathbb{R}^{m}, X \in \mathcal{S}^{m} }{\min} \{ \langle Q,X \rangle \;|\; Bx=b ,\;  x \geq 0 ,\; BX = bx^{\mathrm T}, X\geq 0, \; x = \diag(X) \}.\\
	\end{array}
	\end{equation}
As we assumed $Q$ is symmetric for the linearization-based bound  $v_{LBB'}$,
and thus the dual program of  \eqref{poly_lbb1} is exactly the same as the RLT relaxation \eqref{poly_RLT}.
The equality $v_{RLT_1'}=v_{RLT_1}$ follows from e.g., \cite{adams1986tight}.
\end{proof}

\begin{remark}
Note that  we can strengthen the linearization-based bound   $v_{LBB'}$   by exploiting the sparsity pattern of the binary quadratic program.
Let $\mathcal{G} = \{ (i,j) \;|\; x_{i}x_{j} =0 \; \forall x \in K  \}$. Then, $\mathcal{A}(\alpha) \leq Q$ can be equivalently replaced by $\big(\mathcal{A}(\alpha)\big)_{ij} \leq Q_{ij}$ for every $(i,j) \notin \mathcal{G}$. For Lemma \ref{LBBisRLT}, this implies that the dual variable $X$ in \eqref{poly_lbb1dual} has to satisfy $X_{ij} = 0$ for every $(i,j) \in \mathcal{G}$.
Thus, so strengthened linearization-based bound is
equivalent to the first level RLT relaxation \eqref{poly_RLT} with extra sparsity constraints $X_{ij} = 0$ for every $(i,j) \in \mathcal{G}$.
\end{remark}

Theorem \ref{GLL_lbb} proves that the Generalized Gilmore-Lawler bound $v_{GGL}$ is bounded by the first level RLT relaxation for
 a BQP when the upper bound on $x$ is redundant.
A relation between these two bounds was studied only in the context of the  quadratic assignment problem.
In particular, Frieze and Yadegar \cite{frieze1983quadratic} show that the Gilmore-Lawler bound   with a particular decomposition is weaker than
the Lagrangian relaxation of their well known mixed-integer linear programming formulation for the QAP.
On the other hand, the linear programming relaxation of the mentioned mixed-integer linear programming  formulation is known to be dominated by the first level RLT relaxation, see \cite{adams1990johnson}.
Adams and Johnson  \cite{adams1990johnson}  also show that the QAP bounds from  \cite{carraresi1992new,assad1985lower} are dominated by the first level QAP-RTL bound.
However, we show here more since our proof is not restrict to a particular skew-symmetric matrix.

Note that if $x\leq e$ are not redundant for $\bar{K}$, we have that $v_{LBB'}$ dominates $v_{RLT_1'}$. In particular we have the following result.
\begin{lemma} \label{notRedundant}
Suppose that  $x \leq e$ is not redundant for $\bar{K}$, then $v_{RLT_1'} \leq v_{LBB'} \leq v_{RLT_1}$.
\end{lemma}

\subsection{The LBB$^{*}$ bound}\label{sect:strongest}

 The linearization-based bound $v_{LBB^{*}}$ can be viewed as a strengthened   $v_{LBB'}$ bound.
 We show here by an example that $v_{LBB^{*}}$  may be stronger than $v_{RLT_1}$.

Let $Q_{1},\ldots,Q_{k}$ be linearizable matrices for a given BQP with linearization vectors $c_{1},\ldots,c_{k}$, respectively.
In general,  those matrices are not of the form given in  Lemma \ref{poly_gen1}.
Combining linearizable matrices $Q_{1},\ldots,Q_{k}$ together with the  symmetrized linearizable matrix from Lemma \ref{poly_gen1}, we obtain the following linear programming relaxation:
\begin{equation}\label{poly_lbbstar}
\begin{array}{ll}
& \underset{Y,\alpha,y}{\max} \left \{ b^{\mathrm T}y \;|\; B^{\mathrm T}Y +  Y^{\mathrm T}B+  \Diag(z) +  \sum_{i=1}^{k} \alpha_{i} Q_{i} \leq Q,\; B^{\mathrm T}y \leq 2Y^{\mathrm T}b  + z +C\alpha \right \},
\end{array}
\end{equation}
whose  dual is given by
\begin{equation}\label{poly_RLT_lbbstar}
\begin{array}{cl}
\underset{x \in \mathbb{R}^{m}, X \in \mathcal{S}^{m} }{\min} & \langle Q,X \rangle \\
\text{ s.t. }& (x,X) \in \mathcal{F} \\
& \langle Q_{i}, X \rangle = \langle c_{i} , x \rangle \quad \text{ for } i = 1,\ldots,k.
\end{array}
\end{equation}
where $\mathcal{F}$ is given in \eqref{RLT_fea}. Thus, the linear program \eqref{poly_RLT_lbbstar} is a strengthened relaxation
\eqref{poly_RLT} with additional constraints of the form $\langle Q_{i}, X \rangle = \langle c_{i} , x \rangle$.
Indeed, if $Q_{i}$ is linearizable with linearization vector $c_i$,
then $x^{\mathrm T}Q_{i}x = c_{i}^{\mathrm T}x$ for every $s$-$t$ path $x$. This yields a valid constraint $\langle Q_{i}, X \rangle = \langle c_{i}, x \rangle$.
In particular, when the set of matrices $\{ Q_{1},\ldots,Q_{k}\}$ span the set of linearizable matrices, the optimal  solution of relaxation \eqref{poly_RLT_lbbstar} is just $v_{LBB^{*}}$.

Recall the first iteration of GGL is simply the Gilmore-Lawler type bound, denoted by $v_{GL}$.
To the best of our knowledge, it was only known  that $v_{GL}\leq  v_{GGL}  \leq v_{RLT_1'}= v_{RLT_1}$ for the QAP.
Moreover, $v_{GGL}$ for the QAP was studied only for particular cases, not in general.
We summarize below  relations between several of the mentioned bounds for any BQP.

\begin{proposition} \label{prop:compare}
\begin{enumerate}
\item[(a)] Let  $x \leq e$ be redundant for $\bar{K}$, then
\[
v_{GL} \leq v_{GGL} \leq v_{LBB'} =v_{RLT_1'} = v_{RLT_1} \leq  v_{LBB^{*}}.
\]
\item[(b)]  Let  $x \leq e$ be not redundant for $\bar{K}$, then
\[
 v_{RLT_1'} \leq v_{LBB'}  \leq  v_{LBB^{*}}.
\]
\end{enumerate}
\end{proposition}

\begin{proof}
Part (a) follows from Theorem \ref{GLL_lbb}, Lemma \ref{LBBisRLT} and construction of $v_{LBB^{*}}$.
Part (b) follows from  Lemma \ref{notRedundant} and construction of $v_{LBB^{*}}$.
\end{proof}

It should be clear that for case (a) in the above proposition
we have $v_{RLT_1} = v_{LBB^{*}}$ whenever the spanning set of linearizable matrices
can be characterized by the linearizable matrices of the form $B^{\mathrm T}Y + Y^{\mathrm T}B  + \Diag(z)$.
Unfortunately we were not able to prove that  the  inequality $v_{RLT_{1}} \leq v_{LBB^{*}}$ is strict when  $x \leq e$ is redundant for $\bar{K}$.
On the other hand, we found examples when those two bounds are equal.
By using Proposition \ref{main_basis} we computed the spanning set of linearizable matrices for the QSPP
on tournament graphs, GRID1, GRID2 and PAR-K graphs up to certain size. We refer to \cite{hu2017solving}
for the definition of these acyclic graphs. We have also computed the spanning set of linearizable matrices
for the QAP for $n \leq 9$ by brute-force search. It turns out that the spanning sets of all mentioned instances
can be expressed as  $B^{\mathrm T}Y + Y^{\mathrm T}B  + \Diag(z)$.
This indicates that the set of (symmetrized) linearizable matrices obtained from Lemma \ref{poly_gen1} is considerably rich.

Note that for case (b) in Proposition \ref{prop:compare}, we do not compare $v_{LBB^{*}}$ and $v_{RLT_1}$,
as their relation is not clear in  general.
However, our example below shows that there are instances for which $v_{RLT_1} < v_{LBB^{*}}$.

\begin{example} \label{examp1}{\rm  
Consider the quadratic shortest path problem on the complete symmetric digraph $K^*_n$,  that is a digraph in which every pair of vertices is connected by a bidirectional edge.
Assume the incoming arcs to $s$ and outgoing arcs from $t$ are removed. We can find the spanning set of linearizable matrices as follows.
If $Q$ is linearizable with linearization vector $c$, then $\langle xx^{T},Q - \Diag(c) \rangle = 0$ for every $s$-$t$ path $x$. By enumerating all $s$-$t$ paths,
we have a linear system whose null space represents a set of linearizable matrices.
That set, together with the set of diagonal matrices $\{\Diag(e_{k}) \;|\; k = 1,\ldots,m \}$,
provides the spanning set of linearizable matrices for $K^*_n$. Here, $e_k$  denotes column $k$ of the identity matrix.

 For $n = 5$, there are $85$ linearly independent linearizable matrices in the spanning set.
  However there are only $59$ linearly independent linearizable matrices from the set
  $\{ B^{\mathrm T}(e_ie_j^{\mathrm T}) + (e_je_i^{\mathrm T})B + \Diag(e_{k}) \;|\; i = 1,\ldots,n \text{ and } j,k  = 1,\ldots,m \}$,
  see Lemma \ref{poly_gen1}. Here $B$ is the incidence matrix of the complete symmetric digraph with five vertices.

Now, we use these linearizable matrices as the cost matrix $Q$ in the QSPP. 
For each linearizable matrix, we have $v_{LBB^{*}} = 0$ as there exists an $\alpha$ such that $\mathcal{A}(\alpha) =  Q$ in \eqref{poly_lbb}.
However $v_{LBB'}$ is unbounded from below for some of the instances due to negative loops in $K^*_5$.
Table \ref{example} we shows numerical results for the  first level RLT bound $v_{RLT_1}$ for seven of the mentioned instances.
We also compute the strongest semidefinite programming relaxation $SDP_{NL}$ from \cite{hu2017solving} for the mentioned instances.
Optimal values are provided in the last column of the table.

\begin{table}[H]
	\small
	\centering
	\begin{tabular}{|c|cccc|} \hline
		instance & $v_{RLT_1}$ &  $v_{SDP_{NL}}$  &  $v_{LBB^{*}}$ & opt  \\ \hline
1  & -0.51757  &  -0.51757  &       0  &  0  \\
2  & -0.7399  &  -0.74882  &       0  &  0  \\
3  & -0.58818  &  -0.48862  &       0  &  0  \\
4  & -0.48526  &  -0.48526  &       0  &  0  \\
5  & -0.66547  &  -0.56165  &       0  &  0  \\
6  & -0.27015  &  -0.22945  &       0  &  0  \\
7  & -1.1405  &  -1.1405  &       0  &  0  \\  \hline
	\end{tabular}
	\caption{The bounds for QSPP instances on $K^*_n$}\label{example}
\end{table}

Table \ref{example} shows that $v_{LBB^{*}}$ dominates both $v_{RLT_1}$  and $v_{SDP_{NL}}$ for all instances. 
It is surprising that $v_{LBB^{*}}$ dominates also  the semidefinite programming bound.
}
\end{example}

\subsection{The extended linearization-based bounds} \label{sect:extendLin}
In Section \ref{sec:lbb} we  introduce the linearization-based bounding scheme by exploiting linearizable matrices.
In this section, we extend the notion of linearizable matrices, which enables us to construct a  linearization-type of a bound that turns to be equivalent to the first level RLT bound also for BQPs where $x\leq e$ is not redundant for $\bar{K}$.

For a given BQP, we call the cost matrix $Q$ extended linearizable if there exists a vector
$c$ such that $c^{\mathrm T}x \leq x^{\mathrm T}{Q}x$ for all $x \in K$. An example of extended linearizable matrix is given below.

\begin{lemma}
	Let $\Lambda \in {\mathcal S}_+^m$,  $\Omega \in \mathbb{R}^{m \times m}_{+}$. Then $\Lambda - \Omega$ is extended linearizable, and
	$$(2\Lambda e - \Omega e)^{\mathrm T}x - e^{\mathrm T}\Lambda e \leq x^{\mathrm T}(\Lambda - \Omega)x $$
	for any binary vector $x \in \{0,1\}^{m}$.
\end{lemma}
\begin{proof}
	It is not difficult to see  that $-x^{\mathrm T}\Omega e \leq -x^{\mathrm T}\Omega x $. It also holds that
	$$2e^{\mathrm T}\Lambda x - e^{\mathrm T}\Lambda e - x^{\mathrm T}\Lambda x = -(e-x)^{\mathrm T} \Lambda (e-x) \leq 0.$$
	The statement follows by summing both inequalities.
\end{proof}

Similar to the linearization-based bound, we can use the (extended) linearizable matrices to derive a lower bound for a given BQP.
For any fixed $Y\in \mathbb{R}^{n \times m},z \in \mathbb{R}^{m},\Lambda\in {\mathcal S}_+^m,\Omega \in \mathbb{R}^{m \times m}_{+}$ such that $B^{\mathrm T}Y +  Y^{\mathrm T}B +  \Diag(z)+ \Lambda - \Omega  \leq Q$,  the optimal value of the following problem
\begin{align*}
\min_{x\in \bar{K}} (2Y^{\mathrm T}b  + z + 2\Lambda e - \Omega e )^{\mathrm T}x,
\end{align*}
subtracted  by $e^{\mathrm T}\Lambda e $ is a lower bound for the BQP.
The dual of the above LP is
\begin{align*}
\underset{Y,z,y}{\max}   &\{ \;  b^{\mathrm T}y  \;|\;    B^{\mathrm T}y  \leq 2Y^{\mathrm T}b  + z + 2\Lambda e - \Omega e \}.
\end{align*}
In order to obtain the strongest bound of the above type, one has to solve the following maximization problem.
\begin{equation}\label{extend_dual}
\begin{array}{rrl}
v_{ExLBB}:=& \underset{Y,z,y,\Lambda= \Lambda^{\mathrm T}, \Omega}{\max}  &  b^{\mathrm T}y - \langle J, \Lambda \rangle\\
&  & B^{\mathrm T}Y +  Y^{\mathrm T}B +  \Diag(z)+ \Lambda - \Omega  \leq Q \\
& & B^{\mathrm T}y  \leq 2Y^{\mathrm T}b  + z + 2\Lambda e - \Omega e \\
&& \Lambda \geq 0,\Omega \geq 0.
\end{array}
\end{equation}

We call the solution of \eqref{extend_dual} the extended linearization-based bound. We conclude this section by showing that $v_{ExLBB}$ is equivalent to the first level RLT bound \eqref{RLT}.
\begin{theorem}
Suppose that  $x \leq e$ is not redundant for $\bar{K}$, then 	$v_{ExLBB} = v_{RLT_1}$.
\end{theorem}
\begin{proof} Proof follows by verifying that  \eqref{RLT} and  \eqref{extend_dual} are a primal-dual pair.
\end{proof}
The above result is interesting from a theoretical perspective. However, from a practical view the linearization-based bounds are more attractive due to the smaller number of variables and constraints.

\section{The QSPP linearization problem on DAGs} \label{section_qspp_linear}
In this section, we first introduce several assumptions and definitions.
Then, we derive necessary and sufficient conditions for an instance of the QSPP on a directed acyclic graph to be linearizable, see Theorem \ref{main_linear}.
We also show that those conditions can be verified in ${\mathcal O}(nm^{3})$ time.
This result is  a generalization  of the corresponding results for the QSPP on directed grid graphs from \cite{hu2018special}.

We have the following assumptions in this section:
\begin{enumerate} [topsep=0pt,itemsep=-1ex,partopsep=1ex,parsep=2ex,label=(\roman*)]
	\item $G$ is a directed acyclic graph;
	\item the vertices  $v_{1},\ldots,v_{n}$ are topologically sorted, that is, $(v_{i},v_{j}) \in A $ implies $i < j$;
	\item  for each vertex $v$, there exists at least one $s$-$t$ path containing $v$;
	\item the diagonal entries of the cost matrix $Q$ are zeros.
\end{enumerate}

It is  well-known that a topological ordering of a directed acyclic graph can be computed efficiently.
	We also note that assumptions (iii) and (iv) do not restrict the generality.
 For instance, assumption (iv) is not restrictive as $Q$ is linearizable if and only if $Q+\Diag(c)$
is linearizable for any cost vector $c$, see Lemma 4.1 in \cite{hu2018special}.

Here, we choose and fix an arbitrary labeling of the arcs.
The vertices $v_{2},\ldots,v_{n-1}$, e.g., those between the source vertex $v_{1}$ and the target vertex $v_{n}$, are called the \emph{transshipment vertices}.
For each transshipment vertex, we pick the outgoing arc with the smallest index and call it a \emph{non-basic arc}.
The remaining arcs are \emph{basic}. Thus there are $n-2$ non-basic arcs and $m-n+2$ basic arcs.

Note that for a  linearizable cost matrix its linearization vector is not necessarily unique.
However, we would like to restrict our analysis to the  linearization vectors that are in a unique, reduced form.
For this purpose we introduce the following definitions, see also  \cite{hu2018special}.

\begin{definition}\label{equivalence}
	We say that the cost vectors $c_{1}$ and $c_{2}$ are {equivalent} if  $c_{1}^{\mathrm T}x = c_{2}^{\mathrm T}x$ for all $x \in \mathcal{P}$.
\end{definition}

\begin{definition}\label{Def_reducedform}
	The \emph{reduced form} of a cost vector $c$ is an equivalent cost vector $R(c)$ such that $(R(c))_{e} = 0$ for every non-basic arc $e$.
\end{definition}

The existence of the reduced form of the cost vector $c$ follows from the following transformation.
Let $v$ be a transshipment vertex, and $f$ the non-basic arc going  from $v$. Define $\hat{c}$ as follows:
\begin{align} \label{reduced_form}
\hat{c}_{e} := \begin{cases}
c_{e} - c_{f} & \text{ if } e \text{ is an outgoing arc from vertex } v,\\
c_{e} + c_{f}  & \text{ if } e \text{ is an incoming arc to } v,\\
c_{e} & \text{ otherwise}.
\end{cases}
\end{align}
It is not difficult to verify that $\hat{c}$ and $c$ are equivalent.
Furthermore, if we apply this transformation at each transshipment vertex $v$ in the reverse topological order i.e., $v_{n-1},\ldots,v_{2}$,
then the obtained cost vector, after $n-2$ transformations, is in the reduced form.
Moreover the resulting cost vector is equivalent to $c$.
Let us define now critical paths, see also  \cite{hu2018special}.

\begin{definition}\label{DEF:critical_path}
	For a basic arc $e = (u,v)$, the associated   \emph{critical path} $P_{e}$ is an $s$-$t$ path containing arc $e$ and determined as follows.
	Choose an arbitrary $s$-$u$ path $P_{1}$, and take for $P_{2}$ the unique $v$-$t$ path with only non-basic arcs.
	Then, the critical path $P_{e} = (P_{1},P_{2})$
	is the concatenation of the paths $P_{1}$ and $P_{2}$.
\end{definition}
The uniqueness of $P_{2}$ in Definition \ref{DEF:critical_path} follows from the fact that each transshipment vertex has exactly one
outgoing arc that is non-basic and $G$ is acyclic. Clearly,  to each basic arc $e$  we can associate one critical path $P_{e}$ as given above.

The following result shows that for a linearizable cost matrix $Q$   there exists a unique linearization vector in reduced form.
The uniqueness is up to the choice of non-basic arcs and critical paths.

\begin{proposition}\label{lc_uniqueness}
	Let $Q\in \R^{m\times m}$ be a linearizable cost matrix for the QSPP, and $c \in \R^{m}$ its linearization vector.
	Then, the reduced form of $c$, $R(c) \in \R^{m}$, is uniquely determined by the costs of the critical paths in the underlying graph $G$.
\end{proposition}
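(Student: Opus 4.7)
The plan is to prove that $R(c)$ is uniquely determined by the quadratic costs of the critical paths via an induction on a lexicographic ordering of the basic arcs. The key preliminary observation is that, since $R(c)$ and $c$ are equivalent and $c$ is a linearization vector of $Q$, for every $s$-$t$ path $P$ with characteristic vector $x_{P} \in \mathcal{P}$ we have
$R(c)^{\mathrm T} x_{P} = c^{\mathrm T} x_{P} = x_{P}^{\mathrm T} Q x_{P}$,
so $R(c)^{\mathrm T} x_{P}$ is determined by $Q$ alone, independently of the particular linearization vector $c$. It therefore suffices to express each entry $(R(c))_{e}$ on a basic arc $e$ as an explicit function of the critical-path costs $x_{P_{e}}^{\mathrm T} Q x_{P_{e}}$.

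Order the basic arcs lexicographically: $e = (v_{i}, v_{j})$ precedes $e' = (v_{i'}, v_{j'})$ when $j < j'$, or when $j = j'$ and $i < i'$. Fix a basic arc $e = (u, v)$ with tail $v_{i}$ and head $v_{j}$, and let $P_{e}$ be its critical path as in Definition \ref{DEF:critical_path}, with $s$-$u$ subpath $P_{1}$ and $v$-$t$ subpath $P_{2}$, the latter using only non-basic arcs. Because $(R(c))_{f} = 0$ for every non-basic arc $f$ by definition of the reduced form,
\begin{equation*}
x_{P_{e}}^{\mathrm T} Q x_{P_{e}} \;=\; R(c)^{\mathrm T} x_{P_{e}} \;=\; (R(c))_{e} \;+\; \sum_{f \in P_{1},\, f \text{ basic}} (R(c))_{f}.
\end{equation*}
By the topological ordering of the vertices, every arc $f = (v_{a}, v_{b})$ on the $s$-$u$ path $P_{1}$ satisfies $b \le i < j$, so $f$ strictly precedes $e$ in our lexicographic order. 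By the induction hypothesis each $(R(c))_{f}$ appearing in the sum is already determined by the critical-path costs, hence so is $(R(c))_{e}$. The base case is supplied by basic arcs leaving the source $v_{1}$, which exist because $v_{1}$ is not a transshipment vertex and therefore has no non-basic outgoing arc; for such an $e$ the subpath $P_{1}$ is empty and the identity collapses to $(R(c))_{e} = x_{P_{e}}^{\mathrm T} Q x_{P_{e}}$.

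The point that requires the most care is that Definition \ref{DEF:critical_path} leaves the choice of $P_{1}$ free, so \emph{a priori} different selections of critical paths might appear to produce different values of $(R(c))_{e}$ from the recursion. This consistency, however, is automatic: the explicit reduction procedure \eqref{reduced_form} exhibits a vector $R(c)$ that simultaneously satisfies the displayed identity for \emph{every} admissible choice of $P_{1}$, and the recursion simply reads off the entries of this pre-existing vector. Hence, once any system of critical paths is fixed, their quadratic costs under $Q$ determine $R(c)$ uniquely, which is precisely the claim of the proposition.
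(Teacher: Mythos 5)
Your proof is correct and is essentially the paper's argument in a different presentation: the paper writes the constraints $R(c)^{\mathrm T}x_{P_e} = x_{P_e}^{\mathrm T}Qx_{P_e}$ as a linear system and shows the critical-path/basic-arc incidence matrix $\bar{M}$ is block lower triangular with identity diagonal blocks (hence of full rank $m-n+2$), while your induction on the head-ordered basic arcs is exactly the forward substitution that this triangular structure permits. The consistency point you raise at the end is handled the same way the paper implicitly handles it — the existing vector $R(c)$ solves the system for any choice of critical paths — so no gap remains.
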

\begin{proof}
	Let $M$ be a binary matrix whose rows correspond to the $s$-$t$ paths in $G$ and columns correspond to the basic arcs.
	In particular, $M_{P,e} = 1$ if and only if the path $P$ contains the basic arc $e$.
	Let $b$ be the vector whose elements contain quadratic costs of  $s$-$t$  paths.
	Let $\hat{c}_{B} \in \R^{m-n+2}$ is the subvector of $R(c)$ composed of the elements corresponding to the basic arcs.
	Then, $\hat{c}_{B}$ satisfies the linear system $M\hat{c}_{B} = b$.
	In order to show the uniqueness of $\hat{c}_{B}$ it suffices to prove that the rank of $M$ equals $m-n+2$, which is the number of the basic arcs.
	
	Let $\bar{M}$ be a square submatrix of $M$ of size $(m-n+2) \times (m-n+2)$ whose rows correspond to the critical paths.
	Let $\mathcal{C}_{i}$ be the set of the basic arcs emanating from vertex $v_{i}$ for $i = 1,\ldots,n-1$.
	Since the sets $\mathcal{C}_{1}, \ldots, \mathcal{C}_{n-1}$
	partition the set of the basic arcs, they can be used to index the matrix $\bar{M}$.
	Upon rearrangement, $\bar{M}$ is a block matrix such that the $(i,j)$th block $\bar{M}^{ij}$ is the submatrix  whose rows and columns correspond to $\mathcal{C}_{i}$
	and $\mathcal{C}_{j}$, respectively.  It is readily seen that every diagonal block $\bar{M}^{ii}$ is an identity-matrix. Furthermore, the block $\bar{M}^{ij}$
	is a zero matrix for $i < j$.
	To see this, we first recall that the vertices are topologically ordered. Then, note that for the critical path that is associated to the arc $e = (i,j)$,
	all  arcs visited after $e$ are non-basic  by construction. Thus, the rank of $\bar{M}$ is $m-n+2$, and this finishes the proof.	
\end{proof}	

From the previous proposition, it follows  that for a linearizable cost matrix $Q$
its linearization vector can be computed easily from the costs of the critical paths.
However, the above calculation of the unique linear cost vector in reduced form can be performed even when the linearizability of $Q$ is not known.
Since the resulting vector  does not have to be a linearization vector,  we call it  pseudo-linearization vector.
In particular, we have the following definition, see also  \cite{hu2018special}.

\begin{definition}
	The pseudo-linearization vector
	of the cost matrix $Q \in \R^{m \times m}$ is the unique cost vector $p \in \R^{m}$ in reduced form such that $x^{\mathrm T}Qx = p^{\mathrm T}x$
 for every critical path $x$.
\end{definition}

Here, the uniqueness is up to the choice of non-basic arcs and critical paths.
Recall that the pseudo-linearization vector can be computed also for a non-linearizable cost matrix.
The following lemma shows that  for linearizable $Q$ its pseudo-linearization vector  coincides with the linearization vector in reduced form.

\begin{lemma}\label{lc_pseudo}
	Let $Q$ be linearizable.  Then the corresponding linearization vector in reduced form and the pseudo-linearization vector   are equal.
\end{lemma}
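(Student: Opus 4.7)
The plan is to argue that both vectors satisfy the defining property of the pseudo-linearization vector, and then invoke the uniqueness already established in Proposition \ref{lc_uniqueness}.

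First, I would unpack the two objects. Let $c$ be any linearization vector of $Q$, so $x^{\mathrm T}Qx = c^{\mathrm T}x$ for every $s$-$t$ path $x \in \mathcal{P}$. Applying the transformation \eqref{reduced_form} at each transshipment vertex in reverse topological order produces $R(c)$, which is in reduced form and, being equivalent to $c$ in the sense of Definition \ref{equivalence}, still satisfies $x^{\mathrm T}Qx = R(c)^{\mathrm T}x$ for every $x \in \mathcal{P}$. In particular this identity holds on the subset of critical paths $\{P_{e} : e \text{ basic}\}$. Hence $R(c)$ meets both defining conditions of the pseudo-linearization vector $p$: it is in reduced form, and it reproduces the quadratic cost on every critical path.

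Next I would apply the uniqueness argument from the proof of Proposition \ref{lc_uniqueness}. The reduced-form condition fixes the non-basic entries to $0$, so only the subvector indexed by the $m-n+2$ basic arcs is in question. That subvector is determined by the linear system $M\hat{c}_{B} = b$ restricted to the rows corresponding to critical paths, i.e.\ by $\bar{M}\hat{c}_{B} = \bar{b}$, where $\bar{M}$ was shown to be (after the block reordering by the sets $\mathcal{C}_{i}$) block lower triangular with identity blocks on the diagonal and therefore invertible. Thus any reduced-form vector that agrees with the quadratic cost on the critical paths is uniquely determined, which forces $R(c) = p$.

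There is essentially no obstacle here beyond invoking the right lemma: once one observes that $R(c)$ inherits the critical-path identity from the stronger full-path identity, the uniqueness of reduced-form vectors on critical paths (Proposition \ref{lc_uniqueness}) finishes the job. The only thing to be a little careful about is that Proposition \ref{lc_uniqueness} is phrased under the assumption that $Q$ is linearizable; this is precisely the hypothesis of the current lemma, so the invocation is legitimate. No independent analysis of non-critical paths is required, because the definition of the pseudo-linearization vector does not reference them.
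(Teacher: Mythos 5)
Your proposal is correct and follows essentially the same route as the paper: both arguments observe that the reduced-form linearization vector and the pseudo-linearization vector are reduced-form vectors agreeing with $x^{\mathrm T}Qx$ on every critical path, and then invoke the uniqueness from Proposition \ref{lc_uniqueness} (via the invertibility of $\bar{M}$) to conclude they coincide. Your write-up simply spells out in more detail the steps the paper leaves implicit.
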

\begin{proof}
	Let $c$ be the linearization vector of $Q$ in reduced form, and $p$ the pseudo-linearization vector of $Q$.
	Then, $c^{\mathrm T}x = p^{\mathrm T}x$ for each critical path $x$.  From Proposition \ref{lc_uniqueness} it follows  that $c = p$ since both cost vectors are in the reduced form.
\end{proof}

If we change the input target vertex from $t$ to another vertex $v$, some arcs and vertices have to be removed from $G$ in order to satisfy assumption (iii).
This results in a reduced QSPP instance. To simplify the presentation, we introduce the following notation.

\begin{table}[H]
	\centering
	{\footnotesize
		\begin{tabular}{|>{\centering\arraybackslash}m{2.9cm}|p{12cm}|}
			\hline
			Notation & \multicolumn{1}{|>{\centering\arraybackslash}m{110mm}|}{Definition}\\ \hline
			$G_{v} = (V_{v},A_{v})$ & an induced subgraph of $G$ for which assumption (iii) is satisfied with target vertex $v$ \\[.1cm]
			$Q_{v}$ & an ${|A_{v}|\times |A_{v}|}$ submatrix of $Q$ whose rows and columns correspond to the arcs in $A_{v}$ \\[.1cm]
			$R_{v}(\cdot)$ & the linear {function} that maps a cost vector on $G_{v}$ to its reduced form \\[.1cm]
			$p_{v}$ & the pseudo-linearization vector of $Q_{v}$ \\
			\hline
		\end{tabular} \caption{Notation with respect to target vertex $v$.} \label{target_notattion}
	}
\end{table}

The next result from \cite{hu2018special} establishes a relationship between the linearization vector of $Q_{t}$ and the linearization vector of $Q_{v}$ where $(v,t)\in A$.
\begin{lemma}\emph{\cite{hu2018special}} \label{lc_branch}
	The cost vector $c \in \R^{m}$ is a linearization of $Q_{t} \in \R^{m \times m}$ if and only if
	the cost vector $T_{e}(c) \in \R^{|A_{v}|}$   given by
	\begin{equation}
	\big(T_{e}(c)\big)_{e'} = \begin{cases}
	c_{e'} - 2 \cdot q_{e,e'}  & \text{ if } e' = (u,w) \in A_{v} \text{ and } u \neq s \\
	c_{e'}  - 2 \cdot q_{e,e'} + c_{e}  & \text{ if } e' = (u,w) \in A_{v} \text{ and } u = s \\
	\end{cases},
	\end{equation}
	is a linearization  of $Q_{v}$ for every vertex $v$ such that $e = (v,t)\in A$.
\end{lemma}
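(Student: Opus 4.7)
The plan is to match $s$-$t$ paths ending at the fixed arc $e=(v,t)$ with $s$-$v$ paths in the induced graph $G_v$, and to express the linearization identity on the full graph as a linearization identity on $G_v$ after shifting the cost vector appropriately. Since $G$ is acyclic, every $s$-$t$ path whose last arc is $e$ is obtained by concatenating an $s$-$v$ path with $e$, so if $x \in \{0,1\}^{|A_v|}$ is the characteristic vector of an $s$-$v$ path, its canonical extension $\tilde{x} \in \{0,1\}^{m}$, defined by $\tilde{x}_e = 1$, $\tilde{x}_{e'} = x_{e'}$ for $e' \in A_v$, and $\tilde{x}_{e'} = 0$ otherwise, is the characteristic vector of an $s$-$t$ path. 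Moreover, every $s$-$t$ path through $e$ arises uniquely in this way.

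First I would compute both sides of the target identity. Expanding $\tilde{x}^{\mathrm T} Q_t \tilde{x}$ by splitting each index into ``belongs to $A_v$'' or ``equals $e$'', and using assumption (iv) so that $q_{e,e}=0$, yields
\[
\tilde{x}^{\mathrm T} Q_t \tilde{x} \;=\; x^{\mathrm T} Q_v x \;+\; 2\sum_{e' \in A_v} q_{e,e'}\, x_{e'},
\]
where the factor $2$ groups the two cross terms $q_{e,e'}+q_{e',e}$ under the symmetric convention used in the QSPP set-up (if $Q$ is not symmetric, symmetrising leaves the quadratic form invariant). The linear side is immediate: $c^{\mathrm T} \tilde{x}$ equals $\sum_{e' \in A_v} c_{e'} x_{e'} + c_e$.

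Next I would absorb the constant $c_e$ into a bona fide linear functional of $x$. This is possible because every $s$-$v$ path traverses exactly one arc emanating from the source $s$, and hence $\sum_{e'=(s,\cdot)\in A_v} x_{e'} = 1$ on $s$-$v$ paths. Adding $c_e$ to the coefficient on the arcs leaving $s$ and subtracting $2q_{e,e'}$ from each arc $e' \in A_v$ is precisely the definition of $T_e(c)$. Combining this with the two computations above produces the master identity
\[
T_e(c)^{\mathrm T} x \;-\; x^{\mathrm T} Q_v x \;=\; c^{\mathrm T} \tilde{x} \;-\; \tilde{x}^{\mathrm T} Q_t \tilde{x},
\]
valid for every $s$-$v$ path $x$ with corresponding $s$-$t$ path $\tilde{x}$.

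Both implications then follow from this identity. If $c$ linearises $Q_t$, the right-hand side vanishes on every $s$-$t$ path, in particular on those with last arc $e$, so the left-hand side vanishes on every $s$-$v$ path and $T_e(c)$ linearises $Q_v$. Conversely, assuming $T_e(c)$ linearises $Q_v$ for every vertex $v$ with $e = (v,t) \in A$, the left-hand side vanishes on all $s$-$v$ paths for each such $v$; since every $s$-$t$ path ends at $t$ through some arc $(v,t) \in A$, the right-hand side vanishes on every $s$-$t$ path, so $c$ linearises $Q_t$. The only delicate step is the bookkeeping when re-indexing $Q_t$ to $Q_v$ and the careful use of $\sum_{e'=(s,\cdot)\in A_v} x_{e'} = 1$ (valid thanks to assumption (iii) applied to $G_v$) to convert the constant $c_e$ into a genuine linear term on $A_v$.
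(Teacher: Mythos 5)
The paper does not prove this lemma; it is imported verbatim from \cite{hu2018special}, so there is no in-paper argument to compare yours against. On its own terms your proof is correct and is the natural one: the bijection between $s$-$v$ paths in $G_v$ and $s$-$t$ paths whose last arc is $e=(v,t)$, the expansion $\tilde{x}^{\mathrm T}Q_t\tilde{x}=x^{\mathrm T}Q_vx+2\sum_{e'\in A_v}q_{e,e'}x_{e'}$ (using $q_{e,e}=0$ and the symmetric convention for $Q$), and the absorption of the constant $c_e$ into the arcs leaving $s$ via $\sum_{e'=(s,\cdot)}x_{e'}=1$ together give the master identity $T_e(c)^{\mathrm T}x-x^{\mathrm T}Q_vx=c^{\mathrm T}\tilde{x}-\tilde{x}^{\mathrm T}Q_t\tilde{x}$, from which both directions follow since every $s$-$t$ path enters $t$ through exactly one arc $(v,t)$. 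You also correctly observe that all arcs of an $s$-$v$ path lie in $A_v$, since $G_v$ is the induced subgraph on vertices reachable on $s$-$v$ paths.

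Two small caveats. First, the factor $2$ in $T_e$ genuinely requires $q_{e,e'}=q_{e',e}$; your parenthetical about symmetrising is fine given the paper's convention that the interaction cost of $e,f$ is $2q_{ef}$, but if $Q$ were not symmetric the correct correction would be $q_{e,e'}+q_{e',e}$, not $2q_{e,e'}$. Second, your absorption step assumes $v\neq s$: if $(s,t)\in A$, then for $v=s$ the graph $G_v$ has no arcs, $T_e(c)$ is vacuously a linearization of the empty matrix, and the constant $c_e$ cannot be absorbed, so the master identity (and indeed the lemma as literally stated) degenerates for the single-arc path $s\to t$. This boundary case is an artifact of the imported statement rather than a flaw in your argument, but it is worth flagging explicitly if you want the proof to be airtight.
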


Now, we are ready to prove the main result in this section.

\begin{theorem}\label{main_linear}
	Let $R_{v}(\cdot)$ and $p_{v}$ be defined as in Table \ref{target_notattion}, and  $T_{e}$ given as in Lemma \ref{lc_branch}.
	Then,  the pseudo-linearization vector $p_{t}$ is a linearization  of $Q_{t}$ if and only if $(R_{u} \circ T_{e})(p_{v}) = p_{u}$ for every $e = (u,v) \in A$.
\end{theorem}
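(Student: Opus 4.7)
The plan is to prove the equivalence by establishing the stronger inductive invariant that $p_w$ is a genuine linearization of $Q_w$ for every vertex $w$ through which some $s$-$t$ path passes, and to obtain the advertised identity $(R_u\circ T_e)(p_v)=p_u$ as a byproduct. Both directions will combine Lemma~\ref{lc_branch}, which transports a linearization of $Q_v$ to a linearization of $Q_u$ along any arc $e=(u,v)$, with the uniqueness of the linearization in reduced form supplied by Proposition~\ref{lc_uniqueness} and Lemma~\ref{lc_pseudo}.

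For the forward direction I would induct in reverse topological order, starting from the base case $w=t$ given by hypothesis. In the inductive step, assume $p_v$ linearizes $Q_v$ and let $e=(u,v)\in A$. By Lemma~\ref{lc_branch}, $T_e(p_v)$ linearizes $Q_u$, and since the transformation~\eqref{reduced_form} defining $R_u$ is cost-preserving along $s$-$u$ paths, $R_u(T_e(p_v))$ is also a linearization of $Q_u$, now in reduced form. Proposition~\ref{lc_uniqueness} together with Lemma~\ref{lc_pseudo} then forces $R_u(T_e(p_v))=p_u$, which simultaneously yields the desired identity and the invariant needed at $u$. Assumption~(iii) ensures that every vertex $u\neq t$ has a successor lying on some path to $t$, so the induction reaches every relevant vertex.

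For the backward direction I would induct forward in the topological order, again aiming to establish that $p_w$ linearizes $Q_w$; the case $w=t$ then yields the theorem. Fix an $s$-$w$ path $P$ in $G_w$ and write $P=P'\cup\{e\}$ with $e=(u,w)$. If $u=s$, then $P=\{e\}$ is itself a critical path in $G_w$ (the trailing $w$-$w$ suffix is vacuously a path of non-basic arcs), so $p_w^{\mathrm T}x_P=x_P^{\mathrm T}Q_wx_P$ holds by the defining property of the pseudo-linearization together with assumption~(iv). Otherwise, the inductive hypothesis gives $p_u^{\mathrm T}x_{P'}=x_{P'}^{\mathrm T}Q_ux_{P'}$; combining this with the hypothesis $R_u(T_e(p_w))=p_u$ and the cost-preserving property of $R_u$ yields $T_e(p_w)^{\mathrm T}x_{P'}=x_{P'}^{\mathrm T}Q_ux_{P'}$. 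Substituting the explicit formula for $T_e$ from Lemma~\ref{lc_branch} and using that $P'$ contains exactly one arc emanating from $s$, this rearranges to
\[
p_w^{\mathrm T}x_{P'}+p_w(e)\;=\;x_{P'}^{\mathrm T}Q_ux_{P'}+2\sum_{e'\in P'}q_{e,e'}\;=\;x_P^{\mathrm T}Q_wx_P,
\]
which is the required equality on $P$.

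The hardest part will be the algebraic bookkeeping in the backward step. One must verify that (i) $R_u$ preserves the cost along every $s$-$u$ path, which is built into~\eqref{reduced_form}; (ii) any non-trivial $s$-$u$ path contains exactly one arc leaving $s$, so the correction term $+p_w(e)$ in the $T_e$-formula contributes once per such path; and (iii) $A_u\subseteq A_w$ so that $Q_u$ and $Q_w$ agree on the arcs of $P'$, with $q_{e,e}=0$ letting $2\sum_{e'\in P'}q_{e,e'}$ absorb into $x_P^{\mathrm T}Q_wx_P$. The case $u=s$ has to be handled outside this algebraic chain, because $T_e$ then acts on the empty cost vector and the inductive identity $R_s(T_e(p_w))=p_s$ is vacuous; this is precisely where the critical-path characterization of $p_w$ together with assumption~(iv) does the work.
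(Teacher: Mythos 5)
Your proof is correct, and while your forward direction is essentially the paper's (transport $p_v$ along each arc via Lemma \ref{lc_branch}, then invoke Proposition \ref{lc_uniqueness} and Lemma \ref{lc_pseudo} to pin down the reduced form as $p_u$), your backward direction takes a genuinely different route. The paper proves that direction in the contrapositive: assuming $Q_t$ is not linearizable, it uses Lemma \ref{lc_branch} to find an arc $(v,t)$ where $T_e(p_t)$ fails to linearize $Q_v$, and then recursively descends into $Q_v$ until it hits a linearizable submatrix, at which point uniqueness of the reduced form produces a violated compatibility condition; termination rests on the assertion that every cost matrix on a graph with at most three vertices is linearizable. You instead argue directly: a forward induction in topological order establishing the invariant that $p_w$ linearizes $Q_w$ for every $w$, verified path by path by peeling off the last arc $e=(u,w)$, substituting the explicit $T_e$ formula, and using the hypothesis $(R_u\circ T_e)(p_w)=p_u$ together with the cost-preserving property of the reduction \eqref{reduced_form}. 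Your version requires more algebraic bookkeeping (the single-arc-out-of-$s$ count, $A_u\subseteq A_w$, $q_{e,e}=0$, and the separate base case $u=s$ handled via the critical path $\{(s,w)\}$ — all of which you identify and which do check out), but it buys a self-contained, constructive argument that does not lean on the small-graph linearizability claim, and it makes explicit the invariant that actually underwrites the correctness of Algorithm \ref{main_lbb_alg}. Both proofs are valid; the paper's is shorter, yours is more informative.
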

\begin{proof}
	If $Q_{t}$ is linearizable, then it follows from Lemma \ref{lc_pseudo} and Lemma \ref{lc_branch} that for every vertex $v$, the cost matrix $Q_{v}$ is linearizable with the linearization vector $p_{v}$.
	Thus, $(R_{u} \circ T_{e})(p_{v}) = p_{u}$ for every arc $e = (u,v) \in A$.
	
	Conversely, assume that $Q_{t}$ is not linearizable. Then, from Lemma \ref{lc_branch}, it follows that there exists an arc $e = (v,t) \in A$ such that
	the vector $T_{e}(p_{t})$ is not a linearization vector of $Q_{v}$. Let us distinguish the following two cases:
	
	\begin{enumerate} [topsep=0pt,itemsep=-1ex,partopsep=1ex,parsep=2ex,label=(\roman*)]
		\item If $Q_{v}$ is linearizable, then $p_{v}$ is its linearization vector and $(R_{v} \circ T_{e})(p_{t}) \neq p_{v}$ by Lemma \ref{lc_pseudo}.
		\item If $Q_{v}$ is not linearizable, then we again distinguish two cases. Thus, we repeat the whole argument to $Q_{v}$.
	\end{enumerate}
	
	\noindent This recursive process must eventually end up with case (i), as the number of vertices in the underlying graph decreases in each recursion step, and every cost matrix on a graph with at most three vertices is linearizable. Thus, we obtain $(R_{u} \circ T_{e})(p_{v}) \neq p_{u}$ for some $e = (u,v) \in A$.
\end{proof}

Note that the iterative procedure from Theorem  \ref{main_linear} provides an answer to the QSPP linearization problem.
Moreover, it returns the linearization vector in reduced form if such exists.  We present the pesudo-algorithm for the QSPP linearization problem below.

\begin{algorithm}[H]
	\caption{The QSPP linearization algorithm on DAGs}\label{main_lbb_alg}
	\begin{algorithmic}[1]
		\State \textbf{Input:} A QSPP instance with $G=(V,A)$ acyclic, $s,t\in V$ and cost matrix $Q$.
		\State \textbf{Output:} A linearization vector $c$ of $Q$, if it exists
		\Procedure{isLinearizable}{$G,Q$}
		\For{$v \in V$}
		\State compute the pseudo-linearization $p_{v}$, see Prop \ref{lc_uniqueness}
		\EndFor
		\State \textbf{end for}
		\For{$e = (u,v) \in A$}
		\If{$(R_{u} \circ T_{e})(p_{v}) \neq p_{u}$}
		\State $Q$ is not linearizable
		\State return
		\EndIf
		\State \textbf{end if}
		\EndFor
		\State \textbf{end for}
		\State $c \leftarrow p_t$ is the linearization vector of $Q$\;
		\EndProcedure
	\end{algorithmic}
\end{algorithm}


The quadratic cost of an $s$-$t$ path can be computed in ${\mathcal O}(m^2)$ steps, and thus we need ${\mathcal O}(m^3)$ steps for the $m-n+2$ critical paths.
The pseudo-linearization of $Q$ can be obtained in ${\mathcal O}(m^2)$ steps by solving a linear system whose left-hand-side
is a lower triangular square matrix $\bar{M}$ of order $m-n+2$, see Proposition \ref{lc_uniqueness}.
Since there are $n$ vertices, computing all  pseudo-linearizations requires ${\mathcal O}(nm^3)$ steps.
The rest of the computation takes at most ${\mathcal O}(m^3)$ steps.
Thus the complexity of the algorithm  given in Theorem \ref{main_linear} is ${\mathcal O}(nm^3)$.

Now we show that the linearization algorithm also characterize the set of linearizable matrices.
	\begin{proposition}\label{main_basis}
	Let $G$ be an acyclic digraph. The matrices $Q_{1},\ldots,Q_{k}$ spanning the set of linearizable matrices can be computed in polynomial time.
	\end{proposition}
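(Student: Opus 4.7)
The plan is to show that the set of linearizable cost matrices forms a linear subspace of $\mathbb{R}^{m\times m}$ and that this subspace can be cut out by a linear system whose coefficients depend explicitly (and in polynomial time) on the combinatorial structure of $G$. Once we have such a system, a basis of its kernel is a spanning set $\{Q_1,\ldots,Q_k\}$, and it can be produced by standard Gaussian elimination on a matrix of size at most ${\mathcal O}(m^2)\times m^2$.

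First I would verify the subspace property, which is immediate from the definition: if $Q^{(1)}$ and $Q^{(2)}$ are linearizable with vectors $c^{(1)}$ and $c^{(2)}$, then for all $\alpha,\beta\in\mathbb R$ and every $x\in\mathcal P$, $x^{\mathrm T}(\alpha Q^{(1)}+\beta Q^{(2)})x=(\alpha c^{(1)}+\beta c^{(2)})^{\mathrm T}x$. Next, I would examine the characterization in Theorem \ref{main_linear}: the pseudo-linearization vector $p_v$ of $Q_v$ is obtained by solving the triangular system $\bar M \hat c_B = b$ built from the critical paths (Proposition \ref{lc_uniqueness}), and the right-hand-side $b$ is an affine --- in fact linear --- function of the entries of $Q$, since each entry of $b$ is the quadratic cost $x^{\mathrm T}Qx$ of a fixed critical path $x$. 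Hence each $p_v$ is a linear function of $Q$. The reduction operator $R_u$ and the branching operator $T_e$ are both linear (their formulas \eqref{reduced_form} and the definition in Lemma \ref{lc_branch} involve only sums and subtractions of entries, plus a linear contribution from $Q$ through the $-2q_{e,e'}$ terms). Consequently, every condition $(R_u\circ T_e)(p_v)=p_u$ is a system of linear equations in the entries of $Q$.

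Assembling all such conditions for every arc $e=(u,v)\in A$ yields a global linear system $Mq=0$, where $q\in\mathbb{R}^{m^2}$ is the vectorization of $Q$ and $M$ has at most ${\mathcal O}(m\cdot m)={\mathcal O}(m^2)$ rows. By Theorem \ref{main_linear}, $Q$ is linearizable if and only if $Mq=0$. A basis $\{q_1,\ldots,q_k\}$ of $\ker M$ can be computed by Gaussian elimination in polynomial time, and the matricizations $Q_1,\ldots,Q_k$ of these vectors span the set of linearizable cost matrices in the sense of the Definition preceding Lemma \ref{poly_gen1}.

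The only part that requires genuine care is the bookkeeping that shows that the coefficients of $M$ can be written down in polynomial time: one must evaluate, for each basic arc and each target vertex $v\in V$, the linear dependence of $p_v$ on the entries of $Q$. As observed in the complexity analysis following Algorithm \ref{main_lbb_alg}, computing one pseudo-linearization costs ${\mathcal O}(m^3)$ arithmetic operations, and this is performed symbolically for each of the $m^2$ coordinates of $q$. Thus the total cost of building $M$ is polynomial in $m$ and $n$, and the subsequent kernel computation is polynomial as well, which yields the claim. The main obstacle, in my view, is simply keeping the dependencies linear throughout: one must observe that once the graph and the chosen critical paths are fixed, every operation in Algorithm \ref{main_lbb_alg} amounts to a fixed linear map on the space of cost matrices, so linearizability is exactly a kernel condition.
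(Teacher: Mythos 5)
Your proposal is correct and follows essentially the same route as the paper's own proof: both observe that, once the graph, non-basic arcs and critical paths are fixed, the pseudo-linearization vectors $p_v$ and the operators $R_u$ and $T_e$ are linear in $\mathrm{vec}(Q)$, so that the criterion of Theorem \ref{main_linear} becomes a homogeneous linear system whose kernel basis yields the spanning set. Your write-up is somewhat more explicit than the paper's (in particular about the joint linearity of $T_e$ in $(c,Q)$ via the $-2q_{e,e'}$ terms and about the cost of assembling the system symbolically), but the underlying argument is the same.
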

	\begin{proof}
		Let $v_{i}$ be a transshipment vertex, and $f$ the non-basic arc going from $v_{i}$. Let $M_{i}$ be the matrix with ones on the diagonals, and the following non-zero off-digonal entries,
		\begin{align} \label{reduced_form_mat}
		(M_{i})_{e,f} := \begin{cases}
		-1 & \text{ if } e \text{ is an outgoing arc from vertex } v_{i},\\
		1  & \text{ if } e \text{ is an incoming arc to } v_{i}.\\
		\end{cases}
		\end{align}
		It it easy to see that for a given cost vector $c$,
$\hat{c} = M_{i}c$ gives an alternative representation for \eqref{reduced_form}. Thus, the reduced form $R_{u}(c)$ of $c$ is a linear transformation given by $R_{u}(c)=(M_{n-1}M_{n-2}\cdots M_{2})c$. Similarly, the linear operator $T_{e}$ from  Lemma \ref{lc_branch}, and the pseudo-linearization vector $p_{v}$  can be obtained from linear transformations. Therefore, we can find a matrix $L$ in polynomial time such that $L\text{vec}(Q) = 0$ if and only if $Q$ is linearizable. If $\text{vec}(Q_{1}),\ldots,\text{vec}(Q_{k})$ span the null space of $L$, then $Q$ is linearizable if and only if $\text{vec}(Q) = \text{vec}(\sum_{i=1}^{k}\alpha_{i}Q_{i})$ for some $\alpha \in \mathbb{R}^{k}$.
	\end{proof}
The previous proposition can be used to compute the spanning set of linearizable matrices for the QSPP on DAGs.

\section{Conclusions}\label{sec_conclusions}

In this paper, we present several  applications of the linearization problem for binary quadratic problems.
In particular,  we propose a new lower bounding scheme which follows from a simple certificate for a quadratic function to be non-negative.
Each linearization-based relaxation depends on the chosen  set of linearizable matrices.
This  allows us to compute a number of different lower bounds.
One can obtain the best possible linearization-based bound in the case that the complete characterization of the  set of linearizable matrices is known.

In Theorem \ref{GLL_lbb}, we prove that the Generalized Gilmore-Lawler bound obtained from Algorithm \ref{GGL}, and for any choice of a skew-symmetric matrix,
is bounded by  $v_{LBB'}$, see \eqref{poly_lbb1}.
We also show that $v_{LBB'}$ coincides with   the first level RLT bound by Adams and Sherali \cite{adams1990linearization}
when the upper bounds  on variables are implied by the rest of the constraints, see Lemma \ref{LBBisRLT}.
 This also implies that all Generalized Gilmore-Lawler bounds  are bounded by the first level RLT bound for the mentioned setting.
Similar result was already observed  in the context of the quadratic assignment problem, but it was not known for BQPs in general.
For BQPs where upper bounds on variables are not implied by constraints, Lemma \ref{notRedundant} establishes the relation between  $v_{LBB'}$ and  $v_{RLT_1}$.
In Proposition \ref{prop:compare}, we relate all here presented bounds with the strongest linearization-based bound $v_{LBB^{*}}$.
Our Example \ref{examp1} demonstrates the strength of that bound.

Finally, we provide a polynomial-time algorithm to solve the linearization problem of the quadratic shortest path problem on directed acyclic graphs, see Algorithm  \ref{main_lbb_alg}.
 Our algorithm  also yields the complete characterization of the  set of linearizable matrices for the QSPP on DAGs.
  Thus, we are  able to compute  the strongest linearization-based bound $v_{LBB^*}$ for the QSPP on DAGs.
Our numerical experiments show that  $v_{LBB^*}$ and  $v_{LBB'}$ coincides for all tested instances.

For future research,    it would be interesting  to further investigate strength of linearization-based bounds for types of linearizable matrices that do not fall into the
case of Lemma \ref{poly_gen1}.
   Finally, let us note that the results from this paper (partially) address  questions posed by Çela et al.~\cite{CELA2018818}
   related to computing good bounds by exploiting polynomially solvable special cases.

\bibliographystyle{plainnat}
\bibliography{sample}

\begin{thebibliography}{38}
\providecommand{\natexlab}[1]{#1}
\providecommand{\url}[1]{\texttt{#1}}
\expandafter\ifx\csname urlstyle\endcsname\relax
  \providecommand{\doi}[1]{doi: #1}\else
  \providecommand{\doi}{doi: \begingroup \urlstyle{rm}\Url}\fi

\bibitem[Adams and Johnson(1994)]{adams1990johnson}
Warren~P. Adams and Terri~A. Johnson.
\newblock Improved linear programming-based lower bounds for the quadratic
  assignment problem.
\newblock In \emph{Proceedings of the DIMACS Workshop on Quadratic Assignment
  Problems, DIMACS Series in Discrete Mathematics and Theoretical Computes
  Sciences}, pages 43--–75. AMS, 1994.

\bibitem[Adams and Sherali(1986)]{adams1986tight}
Warren~P. Adams and Hanif~D. Sherali.
\newblock A tight linearization and an algorithm for zero-one quadratic
  programming problems.
\newblock \emph{Management Science}, 32\penalty0 (10):\penalty0 1274--1290,
  1986.

\bibitem[Adams and Sherali(1990)]{adams1990linearization}
Warren~P. Adams and Hanif~D. Sherali.
\newblock Linearization strategies for a class of zero-one mixed integer
  programming problems.
\newblock \emph{Operations Research}, 38\penalty0 (2):\penalty0 217--226, 1990.

\bibitem[Adams and Waddell(2014)]{adams2014linear}
Warren~P. Adams and Lucas Waddell.
\newblock Linear programming insights into solvable cases of the quadratic
  assignment problem.
\newblock \emph{Discrete Optimization}, 14:\penalty0 46--60, 2014.

\bibitem[Assad and Xu(1985)]{assad1985lower}
Arjang~A. Assad and Weixuan Xu.
\newblock On lower bounds for a class of quadratic 0,1 programs.
\newblock \emph{Operations Research Letters}, 4\penalty0 (4):\penalty0
  175--180, 1985.

\bibitem[Billionnet et~al.(2009)Billionnet, Elloumi, and
  Plateau]{billionnet2009improving}
Alain Billionnet, Sourour Elloumi, and Marie-Christine Plateau.
\newblock Improving the performance of standard solvers for quadratic 0-1
  programs by a tight convex reformulation: The {QCR} method.
\newblock \emph{Discrete Applied Mathematics}, 157\penalty0 (6):\penalty0
  1185--1197, 2009.

\bibitem[Buchheim and Traversi(2018)]{buchheim2015quadratic}
Christoph Buchheim and Emiliano Traversi.
\newblock Quadratic 0--1 optimization using separable underestimators.
\newblock \emph{INFORMS Journal on Computing}, 30\penalty0 (3):\penalty0
  421--624, 2018.

\bibitem[Burkard et~al.(2009)Burkard, Dell'Amico, and Martello]{Burkard:2009}
Rainer Burkard, Mauro Dell'Amico, and Silvano Martello.
\newblock \emph{Assignment Problems}.
\newblock Society for Industrial and Applied Mathematics, Philadelphia, PA,
  USA, 2009.
\newblock ISBN 0898716632, 9780898716634.

\bibitem[Burkard et~al.(2012)Burkard, Dell'Amico, and
  Martello]{burkard2012assignment}
Rainer Burkard, Mauro Dell'Amico, and Silvano Martello.
\newblock \emph{Assignment problems, revised reprint}, volume 106.
\newblock Siam, 2012.

\bibitem[Carraresi and Malucelli(1992)]{carraresi1992new}
Paolo Carraresi and Federico Malucelli.
\newblock A new lower bound for the quadratic assignment problem.
\newblock \emph{Operations Research}, 40\penalty0 (1):\penalty0 S22--S27, 1992.

\bibitem[Cela et~al.(2016)Cela, Deineko, and Woeginger]{cela2016linearizable}
Eranda Cela, Vladimir~G. Deineko, and Gerhard~J. Woeginger.
\newblock Linearizable special cases of the {QAP}.
\newblock \emph{Journal of Combinatorial optimization}, 31\penalty0
  (3):\penalty0 1269--1279, 2016.

\bibitem[{\'C}usti{\'c} and Punnen(2018)]{custic2018characterization}
Ante {\'C}usti{\'c} and Abraham~P. Punnen.
\newblock A characterization of linearizable instances of the quadratic minimum
  spanning tree problem.
\newblock \emph{Journal of Combinatorial Optimization}, 35\penalty0
  (2):\penalty0 436--453, 2018.

\bibitem[{\'C}usti{\'c} et~al.(2017){\'C}usti{\'c}, Sokol, Punnen, and
  Bhattacharya]{custic2017characterization}
Ante {\'C}usti{\'c}, Vladyslav Sokol, Abraham~P. Punnen, and Binay
  Bhattacharya.
\newblock The bilinear assignment problem: complexity and polynomially solvable
  special cases.
\newblock \emph{Mathematical Programming}, 166\penalty0 (1-2):\penalty0
  185--205, 2017.

\bibitem[Çela et~al.(2018)Çela, Deineko, and Woeginger]{CELA2018818}
Eranda Çela, Vladimir Deineko, and Gerhard~J. Woeginger.
\newblock New special cases of the quadratic assignment problem with diagonally
  structured coefficient matrices.
\newblock \emph{European Journal of Operational Research}, 267\penalty0
  (3):\penalty0 818 -- 834, 2018.

\bibitem[Erdo{\u{g}}an and Tansel(2011)]{erdougan2011two}
G{\"u}ne{\c{s}} Erdo{\u{g}}an and Barbaros~{\c{C}}. Tansel.
\newblock Two classes of quadratic assignment problems that are solvable as
  linear assignment problems.
\newblock \emph{Discrete Optimization}, 8\penalty0 (3):\penalty0 446--451,
  2011.

\bibitem[Frank and Sotirov()]{deMeijer2019}
de~Meijer Frank and Renata Sotirov.
\newblock The quadratic cycle cover problem.
\newblock \emph{Journal of Combinatorial Optimization, accepted, 2020}.

\bibitem[Frieze and Yadegar(1983)]{frieze1983quadratic}
Alan~M. Frieze and Joseph Yadegar.
\newblock On the quadratic assignment problem.
\newblock \emph{Discrete Applied Mathematics}, 5\penalty0 (1):\penalty0 89--98,
  1983.

\bibitem[Gilmore(1962)]{gilmore1962optimal}
Paul~C. Gilmore.
\newblock Optimal and suboptimal algorithms for the quadratic assignment
  problem.
\newblock \emph{Journal of the society for industrial and applied mathematics},
  10\penalty0 (2):\penalty0 305--313, 1962.

\bibitem[Hahn and Grant(1998)]{hahn1998lower}
Peter Hahn and Thomas Grant.
\newblock Lower bounds for the quadratic assignment problem based upon a dual
  formulation.
\newblock \emph{Operations Research}, 46\penalty0 (6):\penalty0 912--922, 1998.

\bibitem[Hahn et~al.(1998)Hahn, Grant, and Hall]{hahn1998branch}
Peter Hahn, Thomas Grant, and Nat Hall.
\newblock A branch-and-bound algorithm for the quadratic assignment problem
  based on the hungarian method.
\newblock \emph{European Journal of Operational Research}, 108\penalty0
  (3):\penalty0 629--640, 1998.

\bibitem[Hu and Sotirov(2018)]{hu2018special}
Hao Hu and Renata Sotirov.
\newblock Special cases of the quadratic shortest path problem.
\newblock \emph{Journal of Combinatorial Optimization}, 35\penalty0
  (3):\penalty0 754--777, 2018.

\bibitem[Hu and Sotirov(2019)]{hu2017solving}
Hao Hu and Renata Sotirov.
\newblock On solving the quadratic shortest path problem.
\newblock \emph{INFORMS Journal on Computing}, 2019.
\newblock URL \url{https://pubsonline.informs.org/doi/10.1287/ijoc.2018.0861}.

\bibitem[Kabadi and Punnen(2011)]{kabadi2011n}
Santosh~N. Kabadi and Abraham~P. Punnen.
\newblock An {O}($n^4$) algorithm for the {QAP} linearization problem.
\newblock \emph{Mathematics of Operations Research}, 36\penalty0 (4):\penalty0
  754--761, 2011.

\bibitem[Koopmans and Beckmann(1957)]{koopmans1957assignment}
Tjalling~C. Koopmans and Martin Beckmann.
\newblock Assignment problems and the location of economic activities.
\newblock \emph{Econometrica: journal of the Econometric Society}, pages
  53--76, 1957.

\bibitem[Lasserre(2001)]{lasserre2001global}
Jean~B. Lasserre.
\newblock Global optimization with polynomials and the problem of moments.
\newblock \emph{SIAM Journal on optimization}, 11\penalty0 (3):\penalty0
  796--817, 2001.

\bibitem[Laurent(2009)]{laurent2009sums}
Monique Laurent.
\newblock Sums of squares, moment matrices and optimization over polynomials.
\newblock In \emph{Emerging applications of algebraic geometry}, pages
  157--270. Springer, 2009.

\bibitem[Lawler(1963)]{lawler1963quadratic}
Eugene~L. Lawler.
\newblock The quadratic assignment problem.
\newblock \emph{Management Science}, 9\penalty0 (4):\penalty0 586--599, 1963.

\bibitem[Lendl et~al.(2019)Lendl, {\'C}usti{\'c}, and
  Punnen]{lendl2017combinatorial}
Stefan Lendl, Ante {\'C}usti{\'c}, and Abraham~P. Punnen.
\newblock Combinatorial optimization problems with interaction costs:
  Complexity and solvable cases.
\newblock \emph{Discrete Optimization}, 33:\penalty0 101--117, 2019.

\bibitem[Murakami and Kim(1997)]{murakami1997comparative}
Kazutaka Murakami and Hyong~S Kim.
\newblock Comparative study on restoration schemes of survivable atm networks.
\newblock In \emph{INFOCOM'97. Sixteenth Annual Joint Conference of the IEEE
  Computer and Communications Societies. Driving the Information Revolution.,
  Proceedings IEEE}, volume~1, pages 345--352. IEEE, 1997.

\bibitem[Nie and Schweighofer(2007)]{nie2007complexity}
Jiawang Nie and Markus Schweighofer.
\newblock On the complexity of putinar's positivstellensatz.
\newblock \emph{Journal of Complexity}, 23\penalty0 (1):\penalty0 135--150,
  2007.

\bibitem[Punnen(2001)]{punnen2001combinatorial}
Abraham~P. Punnen.
\newblock Combinatorial optimization with multiplicative objective function.
\newblock \emph{International Journal of Operations and Quantitative
  Management}, 7\penalty0 (3):\penalty0 205--210, 2001.

\bibitem[Punnen and Kabadi(2013)]{punnen2013linear}
Abraham~P. Punnen and Santosh~N. Kabadi.
\newblock A linear time algorithm for the {K}oopmans--{B}eckmann {QAP}
  linearization and related problems.
\newblock \emph{Discrete Optimization}, 10\penalty0 (3):\penalty0 200--209,
  2013.

\bibitem[Punnen et~al.(2017)Punnen, Woods, and Kabadi]{punnenWoods}
Abraham~P. Punnen, Brad~D. Woods, and Santosh~N. Kabadi.
\newblock A characterization of linearizable instances of the quadratic
  traveling salesman problem.
\newblock 2017.
\newblock URL \url{http://arxiv.org/abs/1708.07217}.

\bibitem[Punnen et~al.(2019)Punnen, Pandey, and
  Friesen]{punnen2018representations}
Abraham~P. Punnen, Pooja Pandey, and Michael Friesen.
\newblock Representations of quadratic combinatorial optimization problems: A
  case study using the quadratic set covering problem.
\newblock \emph{Computers and Operations Research}, 112,104769, 2019.

\bibitem[Rostami and Malucelli(2015)]{Rostami:QMST}
Borzou Rostami and Federico Malucelli.
\newblock Lower bounds for the quadratic minimum spanning tree problem based on
  reduced cost computation.
\newblock \emph{Computers and Operations Research}, 64:\penalty0 178--188,
  2015.

\bibitem[Rostami et~al.(2018)Rostami, Chassein, Hopf, Frey, Buchheim,
  Malucelli, and Goerigk]{rostami2018quadratic}
Borzou Rostami, Andr{\'e} Chassein, Michael Hopf, Davide Frey, Christoph
  Buchheim, Federico Malucelli, and Marc Goerigk.
\newblock The quadratic shortest path problem: complexity, approximability, and
  solution methods.
\newblock \emph{European Journal of Operational Research}, 268\penalty0
  (2):\penalty0 473--485, 2018.

\bibitem[Sen et~al.(2001)Sen, Pillai, Joshi, and Rathi]{Sen:01}
Suvrajeet Sen, Rekha Pillai, Shirish Joshi, and Ajay~K. Rathi.
\newblock A mean-variance model for route guidance in advanced traveler
  information systems.
\newblock \emph{Transportation Science}, 35\penalty0 (1):\penalty0 37--49,
  2001.

\bibitem[Sivakumar and Batta(1994)]{sivakumar1994variance}
Raj~A. Sivakumar and Rajan Batta.
\newblock The variance-constrained shortest path problem.
\newblock \emph{Transportation Science}, 28\penalty0 (4):\penalty0 309--316,
  1994.

\end{thebibliography}
\end{document}